\newcommand{\Flat}{\operatorname{flat}}
\newcommand{\Hess}{\operatorname{Hess}}
\newcommand{\Id}{\operatorname{Id}}
\newcommand{\R}{{\mathbb R}}
\newcommand{\Ric}{\operatorname{Ric}}
\numberwithin{equation}{section}
\theoremstyle{plain}
\newtheorem{definition}{Definition}
\newtheorem{assumption}{Assumption}
\newtheorem{lemma}{Lemma}
\newtheorem{theorem}{Theorem}
\newtheorem{proposition}{Proposition}
\theoremstyle{remark}
\newtheorem{remark}{Remark}
\newtheorem{example}{Example}
\begin{document}
\title{Mean curvature flow in a Ricci flow background}
\author{John Lott}
\address{Department of Mathematics\\
University of California - Berkeley\\
Berkeley, CA  94720-3840\\ 
USA} \email{lott@math.berkeley.edu}

\thanks{This research was partially supported
by NSF grant DMS-0903076}
\date{January 31, 2012}
\subjclass[2010]{53C44}

\begin{abstract}
Following work of Ecker \cite{Ecker (2007)}, we consider
a weighted Gibbons-Hawking-York functional on a Riemannian
manifold-with-boundary.  We compute its variational
properties and its time derivative under Perelman's modified
Ricci flow. The answer has a boundary term which involves an extension of 
Hamilton's differential
Harnack expression for the mean curvature flow in Euclidean
space. We also derive the evolution equations for the
second fundamental form and the mean curvature, under a
mean curvature flow in a Ricci flow background.
In the case of a gradient Ricci soliton background, we
discuss mean curvature solitons and Huisken monotonicity.   
\end{abstract}

\maketitle


\section{Introduction} \label{section1}

In \cite{Ecker (2007)}, Ecker found a surprising link between
Perelman's ${\mathcal W}$-functional for Ricci flow and Hamilton's 
differential Harnack
expression for mean curvature flow in $\R^n$. 
If $\Omega \subset \R^n$ is a bounded domain with smooth boundary,
he considered the integral over $\Omega$ of Perelman's 
${\mathcal W}$-integrand
\cite[Proposition 9.1]{Perelman (2002)},
the latter being defined using a positive solution $u$ of the
backward heat equation. With an appropriate boundary condition
on $u$, the time-derivative of the integral has two
terms. The first term is the integral over $\Omega$ of a
nonnegative quantity, as in Perelman's work. The second
term is an integral over $\partial \Omega$. Ecker
showed that the integrand of the second term is 
Hamilton's differerential Harnack expression for mean curvature
flow \cite{Hamilton (1995)}. 
Hamilton had proven that this expression is nonnegative
for a weakly convex mean curvature flow in $\R^n$.

After performing diffeomorphisms generated by $\nabla \ln u$,
the boundary of $\Omega$ evolves by mean curvature
flow in $\R^n$. Ecker conjectured that his
${\mathcal W}$-functional for $\Omega$ is nondecreasing in time
under the mean curvature flow of any compact
hypersurface in $\R^n$. This conjecture is still open.

In the present paper we look at analogous relations for
mean curvature flow in
an arbitrary Ricci flow background. We begin with a
version of Perelman's ${\mathcal F}$-functional
\cite[Section 1.1]{Perelman (2002)}
for a manifold-with-boundary $M$. We add a boundary
term to the interior integral of ${\mathcal F}$ 
so that the result $I_\infty$ has
nicer variational properties.  One can think of $I_\infty$
as a weighted version of the Gibbons-Hawking-York functional 
\cite{Gibbons-Hawking (1977),York (1972)}, where
``weighted'' refers to a measure $e^{-f} \: dV_g$. We compute
how $I_\infty$ changes under a variation of the Riemannian
metric $g$ (Proposition \ref{prop2}). 
We also compute the time-derivative of $I_\infty$
when $g$ evolves by Perelman's modified Ricci flow
(Theorem \ref{thm2}).
We derive the evolution equations for the second fundamental
form of $\partial M$ and the mean curvature 
of $\partial M$ under the modified Ricci flow
(Theorem \ref{thm3}).

After performing diffeomorphisms generated by
$- \nabla f$, the Riemannian metric on $M$ evolves by the 
standard Ricci flow and $\partial M$ evolves by 
mean curvature flow. 

\begin{theorem} \label{thm1}
If $u = e^{-f}$ is a solution to the conjugate heat equation
\begin{equation} \label{1.1}
\frac{\partial u}{\partial t} = (- \triangle + R) u
\end{equation}
on $M$, satisfying the boundary condition
\begin{equation}
H + e_0 f = 0
\end{equation}
on $\partial M$,
then
\begin{align} \label{1.2}
& \frac{dI_\infty}{dt} =  2 \int_{M} |\Ric + \Hess(f)|^2 \: e^{-f} \: dV \\
& + \: 2 \int_{\partial M}  
\left( \frac{\partial H}{\partial t} -
2 \langle \widehat{\nabla} f, \widehat{\nabla} H \rangle
+ A( \widehat{\nabla} f, \widehat{\nabla} f)
+ 2 \Ric(e_0,\widehat{\nabla} f) - \frac12 e_0 R - H \Ric(e_0, e_0)
\right)
\: e^{-f} \: dA. \notag 
\end{align}
\end{theorem}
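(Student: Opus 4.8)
The plan is to obtain Theorem \ref{thm1} as the unmodified-gauge version of Theorem \ref{thm2}, by performing the time-dependent diffeomorphisms $\psi_t$ generated by $-\nabla f$ — the standard device that converts Perelman's modified Ricci flow into the unmodified one. First I would record the dictionary between the two pictures: if $(g(t),f(t))$ evolves by the modified flow underlying Theorem \ref{thm2}, then the pulled-back data $\bar g(t)=\psi_t^* g(t)$, $\bar f = f\circ\psi_t$ evolve so that $\bar g$ solves the unmodified Ricci flow and $\bar u = e^{-\bar f}$ solves the conjugate heat equation \eqref{1.1}; conversely, every solution of \eqref{1.1} along Ricci flow arises this way. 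The key boundary observation is that along $\partial M$ the generating field $-\nabla f$ has normal component $-(e_0 f)\,e_0$, so under $\psi_t$ the hypersurface $\partial M$ moves with normal speed $-e_0 f$; thus the condition $H+e_0 f=0$ is precisely what forces $\partial M$ to flow by mean curvature flow, and (equivalently, $\partial_{e_0}u = Hu$) it is compatible with the boundary condition entering Theorem \ref{thm2}.

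Since $I_\infty$ is manifestly diffeomorphism invariant, $I_\infty(\bar g(t),\bar f(t)) = I_\infty(g(t),f(t))$ for all $t$, hence the time derivative $\frac{dI_\infty}{dt}$ computed in the unmodified picture equals the one furnished by Theorem \ref{thm2}. The interior term $2\int_M |\Ric+\Hess(f)|^2\,e^{-f}\,dV$ of Theorem \ref{thm2} is the integral of a pointwise diffeomorphism invariant against the invariant weighted measure $e^{-f}\,dV$, so it carries over verbatim and becomes the first term on the right-hand side of \eqref{1.2}, now computed with $(\bar g,\bar f)$. Everything nontrivial is concentrated in the boundary term.

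For the boundary term I would work in two moves. First, substitute the evolution equation for $\partial H/\partial t$ under the modified flow from Theorem \ref{thm3} into the boundary integrand of Theorem \ref{thm2}, so that the entire right-hand side becomes purely geometric. Second, transport this geometric boundary integrand through the gauge change $\psi_t$: since $\psi_t$ does not preserve $\partial M$, the second fundamental form $A$ and mean curvature $H$ are merely carried along, and differentiating the identity $H^{\mathrm{mod}} = H^{\mathrm{Ric}}\circ\psi_t$ in $t$ converts the modified-flow time derivative of $H$ into the mean-curvature-flow material derivative $\frac{\partial H}{\partial t}$ appearing in \eqref{1.2}, together with corrections coming from $-\nabla f$ differentiating $H$ and from the transformation of $A$, $R$, $\Ric$ and the normal frame. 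Reassembling the corrections generated at these two moves is what produces the remaining terms $-2\langle\widehat{\nabla}f,\widehat{\nabla}H\rangle$, $A(\widehat{\nabla}f,\widehat{\nabla}f)$, $2\Ric(e_0,\widehat{\nabla}f)$, $-\frac12 e_0 R$ and $-H\Ric(e_0,e_0)$; the weighted boundary measure $e^{-f}\,dA$ is itself gauge invariant. (Alternatively, one could differentiate $I_\infty$ directly along the Ricci-flow-plus-mean-curvature-flow evolution using Proposition \ref{prop2} and the motion of $\partial M$, but the gauge route keeps the bookkeeping organized.)

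I expect the main obstacle to be exactly this last step: tracking how normal versus tangential derivatives, the second fundamental form, the scalar curvature and the Ricci contractions transform under a flow of diffeomorphisms that moves the boundary, and doing so with the correct signs and combinatorial factors — in particular reconciling the two different meanings of ``$\partial H/\partial t$'' appearing on the two sides. As a consistency check I would specialize to a Ricci-flat, in particular Euclidean, background, where $\Ric\equiv 0$ and $R\equiv 0$, so that the bracket in \eqref{1.2} collapses to $\frac{\partial H}{\partial t}-2\langle\widehat{\nabla}f,\widehat{\nabla}H\rangle + A(\widehat{\nabla}f,\widehat{\nabla}f)$, and verify that this reproduces Hamilton's differential Harnack expression, matching Ecker \cite{Ecker (2007)}.
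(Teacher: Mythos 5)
Your proposal follows essentially the same route as the paper: one combines Theorem \ref{thm2} with the evolution equation for $H$ from Theorem \ref{thm3} (this is Proposition \ref{prop3}, where the second contracted Bianchi identity is what produces the $-\tfrac12 e_0 R - H\,\Ric(e_0,e_0)$ terms), and then transfers the formula to the Ricci-flow/mean-curvature-flow picture via the diffeomorphisms generated by $\nabla f$, with the boundary condition $H + e_0 f = 0$ identifying the boundary motion as mean curvature flow (Propositions \ref{prop4} and \ref{prop5}). The only step you leave implicit is that Bianchi-identity conversion, and the gauge bookkeeping you worry about is lighter than you fear: the change of picture merely removes the Lie-derivative terms, turning the coefficient $-1$ of $\langle\widehat{\nabla} f,\widehat{\nabla} H\rangle$ in Proposition \ref{prop3} into the $-2$ of (\ref{1.2}).
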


Here $R$ is the scalar curvature of $M$,
$\widehat{\nabla}$ is the boundarywise derivative,
$e_0$ is the inward unit normal on $\partial M$, 
$H$ is the mean curvature of
$\partial M$ 
and $A(\cdot, \cdot)$ is the second fundamental form of $\partial M$.

\begin{remark}
If $g(t)$ is flat Ricci flow on $\R^n$ then the boundary integrand
\begin{equation} \label{1.3}
\frac{\partial H}{\partial t} -
2 \langle \widehat{\nabla} f, \widehat{\nabla} H \rangle
+ A( \widehat{\nabla} f, \widehat{\nabla} f)
+ 2 \Ric(e_0, \widehat{\nabla} f) - \frac12 e_0 R - H \Ric(e_0, e_0)
\end{equation}
becomes Hamilton's differential Harnack expression
\cite[Definition 4.1]{Hamilton (1995)}
\begin{equation} \label{Harnack}
Z = \frac{\partial H}{\partial t} +
2 \langle V, \widehat{\nabla} H \rangle
+ A(V, V)
\end{equation}
when the vector field
$V$ of (\ref{Harnack}) is taken to be
$- \widehat{\nabla} f$. In this flat case, Theorem \ref{thm1} is the
${\mathcal F}$-version of Ecker's result.
\end{remark}

\begin{remark}
Writing (\ref{1.3}) as
\begin{equation}
\frac{\partial H}{\partial t} +
2 \langle V, \widehat{\nabla} H \rangle - H \Ric(e_0, e_0)
+ \left\langle e_0,
\nabla_V V - \frac12 \nabla R 
- 2 \Ric(V, \cdot) \right\rangle
\end{equation}
(with $V = - \widehat{\nabla} f$)
indicates a link to ${\mathcal L}$-geodesics, since
$\nabla_V V - \frac12 \nabla R 
- 2 \Ric(V, \cdot)$ (with $V = \gamma^\prime$) enters in the Euler-Lagrange
equation for the steady version of Perelman's ${\mathcal L}$-length
\cite[Section 4]{Lott (2009)}.
\end{remark}

Important examples of Ricci flow solutions come from
gradient solitons.  With such a background geometry, 
there is a natural notion
of a mean curvature soliton. We show that (\ref{1.3}) vanishes
on such solitons (Proposition \ref{prop6}), in analogy to what happens for
mean curvature flow in $\R^n$
\cite[Lemma 3.2]{Hamilton (1995)}. 
On the other hand,
in the case of convex mean curvature flow
in $\R^n$, Hamilton showed that the shrinker version of
(\ref{Harnack}) is nonnegative
for all vector fields $V$ \cite[Theorem 1.1]{Hamilton (1995)}. 
We cannot expect a direct analog
for mean curvature flow in an arbitrary gradient shrinking soliton background,
since local convexity of the hypersurface will generally not be
preserved by the flow.

Magni-Mantegazza-Tsatis
\cite{Magni-Mantegazza-Tsatis (2009)} showed that Huisken's monotonicity
formula for mean curvature flow in $\R^n$ 
\cite[Theorem 3.1]{Huisken (1990)} 
extends to mean curvature flow in a gradient Ricci soliton background.
We give two proofs of this result
(Proposition \ref{prop7}).  The first one is computational
and is essentially the same as the proof in 
\cite{Magni-Mantegazza-Tsatis (2009)}; the second one is
more conceptual.

In this paper we mostly deal with ``steady'' quantities :
${\mathcal F}$-functional, gradient steady soliton, etc.
There is an evident extension to the shrinking or expanding
case.

The structure of the paper is as follows. Section \ref{section2}
has some preliminary material. In Section
\ref{section3} we define the weighted Gibbons-Hawking-York action and study
its variational properties. Section \ref{section4} contains the derivation of
the evolution equations for the second fundamental form and
the mean curvature of the boundary, when the Riemannian metric
of the interior evolves by the modified Ricci flow. In Section
\ref{section5} we consider mean curvature flow in a Ricci flow background
and prove Theorem \ref{thm1}.
In the case of a gradient steady Ricci soliton background, we
discuss mean curvature solitons and Huisken monotonicity.

More detailed descriptions appear at the beginnings of the sections.

\section{Preliminaries} \label{section2}

In this section we gather some useful formulas about the geometry
of a Riemannian manifold-with-boundary. We will use the Einstein
summation convention freely.

Let $M$ be a smooth compact $n$-dimensional manifold-with-boundary.
We denote local coordinates for $M$ by $\{x^\alpha\}_{\alpha=0}^{n-1}$.
Near $\partial M$, we take $x^0$ to be a local defining function
for $\partial M$. We denote the local coordinates for
$\partial M$ by $\{x^i\}_{i=1}^n$.

If $g$ is a Riemannian metric on $M$ then we let $\nabla$ denote
the Levi-Civita connection on $TM$ and we let $\widehat{\nabla}$
denote the Levi-Civita connection on $T\partial M$. Algorithmically,
when taking covariant derivatives we use $\Gamma^\alpha_{\: \: \beta \gamma}$ to
act on indices in $\{0, \ldots, n-1\}$ and $\widehat{\Gamma}^i_{\: \: jk}$ to
act on indices in $\{1, \ldots, n-1\}$. We let $dV$ denote the
volume density on $M$ and we let $dA$ denote the area density on 
$\partial M$.

Let $e_0$ denote the inward-pointing unit
normal field on $\partial M$. For calculations, we choose local
coordinates near a point of $\partial M$
so that $\partial_0 \big|_{\partial M} = e_0$
and for all $(x^1, \ldots, x^{n-1})$, the curve 
$t \rightarrow (t, x^1, \ldots, x^{n-1})$ is a
unit-speed geodesic which meets $\partial M$ orthogonally.
In these coordinates, we can write
\begin{equation} \label{2.1}
g = (dx^0)^2 + \sum_{i,j=1}^{n-1} g_{ij}(x^0, x^1 \ldots, x^{n-1}) \:
dx^i \: dx^j.
\end{equation}
We write $A = (A_{ij})$ for the second fundamental form
of $\partial M$,
so $A_{ij} = g(e_0, \nabla_{\partial_j} \partial_i)$, and we write
$H = g^{ij} A_{ij}$ for the mean curvature. Then on $\partial M$,
we have
\begin{equation} \label{2.2}
A_{ij} = \Gamma_{0ij} = - \Gamma_{i0j} = - \Gamma_{ij0} = 
- \frac12 \partial_0 g_{ij}.
\end{equation}

The Codazzi-Mainardi equation
\begin{equation} \label{2.3}
R_{0ijk} = \widehat{\nabla}_j A_{ik} - \widehat{\nabla}_k A_{ij}
\end{equation}
implies that
\begin{equation} \label{2.4}
R_{0j} = \widehat{\nabla}_j H - \widehat{\nabla}_i A^i_{\: \: j}. 
\end{equation}

For later reference,
\begin{equation} \label{2.5}
\nabla_i R_{0j} = \widehat{\nabla}_i R_{0j} - \Gamma^k_{\: \: 0i} R_{kj}
- \Gamma^0_{\: \: ji} R_{00} = \widehat{\nabla}_i R_{0j} + A^k_{\: \: i} R_{kj}
- A_{ij} R_{00}.
\end{equation}
For any symmetric $2$-tensor field $v$, we have
\begin{equation} \label{2.6}
\nabla_0 \left( g^{ij} v_{ij} \right) = 
g^{ij} \partial_0 v_{ij} + 2 A^{ij} v_{ij} = 
g^{ij} \nabla_0 v_{ij}
\end{equation}
on $\partial M$.
More generally, on $\partial M$, $g^{ij}$ is covariantly constant
with respect to $\nabla_0$.

If $f \in C^\infty(M)$ then on $\partial M$, we have
\begin{equation} \label{2.7}
\nabla_i \nabla_j f = \widehat{\nabla}_i \widehat{\nabla}_j f
- \Gamma^0_{\: \: ji} \nabla_0 f = \widehat{\nabla}_i \widehat{\nabla}_j f
- A_{ij} \nabla_0 f
\end{equation}
and
\begin{equation} \label{2.8}
\nabla_i \nabla_0 f = \widehat{\nabla}_i \nabla_0 f
- \Gamma^k_{\: \: 0i} \widehat{\nabla}_k f = \widehat{\nabla}_i \nabla_0 f
+ A_i^{\: \: k} \: \widehat{\nabla}_k f
\end{equation}

\section{Variation of the weighted Gibbons-Hawking-York action}
\label{section3}

In this section we define the weighted Gibbons-Hawking-York
action $I_\infty$ and study its variational properties.

In Subsection \ref{subsection3.1} we list how some geometric quantities vary
under changes of the metric.  As a warmup, in Subsection \ref{subsection3.2}
we rederive the variational formula for the Gibbons-Hawking action.
In Subsection \ref{subsection3.3}
we derive the variational formula for the weighted Gibbons-Hawking action.
In Subsection \ref{subsection3.4} we compute its time derivative under the
modified Ricci flow.

\subsection{Variations} \label{subsection3.1}

Let $\delta g_{\alpha \beta} = v_{\alpha \beta}$ be a variation of $g$.
We write $v = g^{\alpha \beta} v_{\alpha \beta}$. We collect some
variational equations :
\begin{equation} \label{3.1}
\delta R = \nabla_\alpha \nabla_\beta v^{\alpha \beta} - \nabla_\alpha
\nabla^\alpha v - v^{\alpha \beta} R_{\alpha \beta},
\end{equation}

\begin{equation} \label{3.2}
\delta (dV) = \frac{v}{2} \: dV,
\end{equation}

\begin{equation} \label{3.3}
\delta(e_0) = - \frac12 v_0^{\: \: 0} e_0 - v_0^{\: \: k} \partial_k,
\end{equation}

\begin{align} \label{3.4}
\delta A_{ij} & = \frac12 \left( \nabla_i v_{0j} + \nabla_j v_{0i}
- \nabla_0 v_{ij} + A_{ij} v_{00} \right) \\
& = \frac12 \left( \widehat{\nabla}_i v_{0j} + A_{ki} v^k_{\: \: j}
+ \widehat{\nabla}_j v_{0i} + A_{kj} v^k_{\: \: i}
- \nabla_0 v_{ij} - A_{ij} v_{00} \right), \notag
\end{align}

\begin{equation} \label{3.5}
\delta H = - v^{ij} A_{ij} + g^{ij} \delta A_{ij} =
\widehat{\nabla}_i v_0^{\: \: i} - \frac12 \left(
g^{ij} \nabla_0 v_{ij} + H v_{00} \right), 
\end{equation}

\begin{equation} \label{3.6}
\delta (dA) = \frac12 \: v_i^{\: \: i} \: dA,
\end{equation}

\subsection{Gibbons-Hawking-York action} \label{subsection3.2}

\begin{definition} \label{def1}
The Gibbons-Hawking-York action 
\cite{Gibbons-Hawking (1977),York (1972)}
is
\begin{equation} \label{3.7}
I_{GHY}(g) = \int_M R \: dV \: + \: 2 \: \int_{\partial M} H \: dA.
\end{equation}
\end{definition}

If $n = 2$ then $I_{GHY}(g) = 4\pi \chi(M)$.

\begin{proposition} \label{prop1}
\begin{equation} \label{3.8}
\delta I_{GHY} = - \int_M v^{\alpha \beta} \left( R_{\alpha \beta} -
\frac12 R g_{\alpha \beta} \right) \: dV \: - \:
\int_{\partial M} v^{ij} \left( A_{ij} - g_{ij} H \right) \: dA. 
\end{equation}
\end{proposition}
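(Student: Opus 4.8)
The plan is to compute $\delta I_{GHY}$ term by term, using the variational formulas collected in Subsection \ref{subsection3.1}, and then to recognize the result as the claimed combination of the Einstein tensor in the interior and the ``boundary Einstein tensor'' $A_{ij} - g_{ij} H$ on $\partial M$. First I would vary the interior integral $\int_M R \, dV$. By the product rule this gives $\int_M (\delta R) \, dV + \int_M R \, \delta(dV)$. Using (\ref{3.1}) for $\delta R$ and (\ref{3.2}) for $\delta(dV)$, the second piece is immediately $\int_M \frac12 R v \, dV = \frac12 \int_M v^{\alpha\beta} R g_{\alpha\beta} \, dV$, which already supplies the $- \frac12 R g_{\alpha\beta}$ part of the interior integrand once signs are tracked. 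The $- v^{\alpha\beta} R_{\alpha\beta}$ term of (\ref{3.1}) supplies the $R_{\alpha\beta}$ part directly. The remaining two terms of $\delta R$, namely $\nabla_\alpha \nabla_\beta v^{\alpha\beta} - \nabla_\alpha \nabla^\alpha v$, are pure divergences: $\nabla_\alpha W^\alpha$ with $W^\alpha = \nabla_\beta v^{\alpha\beta} - \nabla^\alpha v$. Integrating these over $M$ by the divergence theorem produces a boundary term $- \int_{\partial M} W^\alpha e_{0\alpha} \, dA$ (with the sign fixed by the convention that $e_0$ is the \emph{inward} unit normal), i.e.\ $- \int_{\partial M} \bigl( \nabla_\beta v^{0\beta} - \nabla^0 v \bigr) \, dA$, where the superscript $0$ denotes contraction with $e_0$.

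Next I would vary the boundary integral $2 \int_{\partial M} H \, dA$, getting $2 \int_{\partial M} (\delta H) \, dA + 2 \int_{\partial M} H \, \delta(dA)$. Formula (\ref{3.6}) gives $\delta(dA) = \frac12 v_i^{\ i} \, dA$, so the second piece is $\int_{\partial M} H v_i^{\ i} \, dA$, which will combine with the $g_{ij} H$ term. Formula (\ref{3.5}) gives $\delta H = \widehat{\nabla}_i v_0^{\ i} - \frac12 (g^{ij} \nabla_0 v_{ij} + H v_{00})$. The term $\widehat{\nabla}_i v_0^{\ i}$ is a boundarywise divergence and integrates to zero over the closed manifold $\partial M$ by Stokes; the term $- \frac12 H v_{00}$ contributes to the normal-normal part.

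The heart of the calculation is then the bookkeeping on $\partial M$: I must collect the boundary term left over from the divergence theorem applied to $\delta R$ together with the contributions from $2 \delta H$ and $2 H \delta(dA)$, and show that everything assembles into exactly $- \int_{\partial M} v^{ij}(A_{ij} - g_{ij} H) \, dA$, with \emph{no} surviving terms involving the normal components $v_{00}$ or $v_{0i}$. Concretely, I would expand $\nabla_\beta v^{0\beta} - \nabla^0 v$ on $\partial M$ by splitting the $\beta$ sum into the $0$-direction and the tangential directions, converting full connection $\nabla$ into boundary connection $\widehat{\nabla}$ plus Christoffel corrections via (\ref{2.2}), (\ref{2.7}), (\ref{2.8}), and the fact (\ref{2.6}) that $g^{ij}$ is $\nabla_0$-covariantly constant on $\partial M$. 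The $\widehat{\nabla}$-divergence pieces integrate away, and what remains is a purely algebraic (zeroth-order in $v$) expression in $A_{ij}$, $H$, and the components of $v$. The expected miracle — and this is the step I anticipate as the main obstacle, since it requires careful sign and index discipline — is that the $v_{00}$ terms cancel among the three sources, the $v_{0i}$ terms cancel (or integrate away as $\widehat{\nabla}$-divergences), and the tangential $v^{ij}$ terms collapse to precisely $A_{ij} - g_{ij} H$. Once that cancellation is verified, the two displayed integrals in (\ref{3.8}) are exactly what is left, completing the proof. As a sanity check one can specialize to $n=2$, where $I_{GHY} = 4\pi\chi(M)$ is a topological invariant, so $\delta I_{GHY}$ must vanish; indeed in dimension $2$ the Einstein tensor $R_{\alpha\beta} - \frac12 R g_{\alpha\beta}$ vanishes identically and $A_{ij} - g_{ij} H = A_{11} - g_{11} H = 0$ since $H = g^{11} A_{11}$, consistent with (\ref{3.8}).
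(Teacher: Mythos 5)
Your proposal follows essentially the same route as the paper's proof: vary $\int_M R\, dV$ via (\ref{3.1}), (\ref{3.2}) and the divergence theorem with the inward-normal sign, vary $2\int_{\partial M} H\, dA$ via (\ref{3.5}), (\ref{3.6}), and then observe that on $\partial M$ the $\widehat{\nabla}_i v_0^{\ i}$ terms integrate away while the $v_{00}$ and $g^{ij}\nabla_0 v_{ij}$ contributions cancel, leaving $-\int_{\partial M} v^{ij}(A_{ij}-g_{ij}H)\, dA$, exactly as in (\ref{3.9})--(\ref{3.12}). The cancellations you anticipate are precisely the ones the paper verifies, so the outline is correct and matches the given argument.
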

\begin{proof}
From (\ref{3.1}) and (\ref{3.2}),
\begin{equation} \label{3.9}
\delta \int_M R \: dV \: = \:
- \int_M v^{\alpha \beta} \left( R_{\alpha \beta} -
\frac12 R g_{\alpha \beta} \right) \: dV \: - \:
\int_{\partial M} \left( \nabla_\alpha v_0^{\: \: \alpha} - \nabla_0 v
\right) \: dA.
\end{equation}
On the boundary,
\begin{align} \label{3.10}
\nabla_\alpha v_0^{\: \: \alpha} - \nabla_0 v = &
\nabla_i v_0^{\: \: i}  - \nabla_0 v_i^{\: \: i} \\
= & \widehat{\nabla}_i v_0^{\: \: i} - 
\Gamma^j_{\: \: 0i} \: v_j^{\: \: i} + 
\Gamma^i_{\: \: 0i} \: v_0^{\: \: 0}
- \nabla_0 (g^{ij} v_{ij}) \notag \\
= & \widehat{\nabla}_i v_0^{\: \: i} + A^{ij} v_{ij} - H v_0^{\: \: 0}
- g^{ij} \nabla_0 v_{ij}. \notag
\end{align}

From (\ref{3.5}) and (\ref{3.6}),
\begin{equation} \label{3.11}
\delta(H \: dA) = \left( \widehat{\nabla}_i v_0^{\: \: i} + \frac12 \left(
- g^{ij} \nabla_0 v_{ij} - H v_{00} + H v_i^{\: \: i} \right) \right)
\: dA.
\end{equation}

Combining (\ref{3.9}), (\ref{3.10}) and (\ref{3.11}) gives
\begin{align} \label{3.12}
\delta I_{GHY} & = - \int_M v^{\alpha \beta} \left( R_{\alpha \beta} -
\frac12 R g_{\alpha \beta} \right) \: dV \: 
+ \: \int_{\partial M} \widehat{\nabla}_i v_0^{\: \: i} \: dA \:
- \:
\int_{\partial M} v^{ij} \left( A_{ij} - g_{ij} H \right) \: dA \\
& = - \int_M v^{\alpha \beta} \left( R_{\alpha \beta} -
\frac12 R g_{\alpha \beta} \right) \: dV \: - \:
\int_{\partial M} v^{ij} \left( A_{ij} - g_{ij} H \right) \: dA. \notag
\end{align}
This proves the proposition.
\end{proof}

If the induced metric $g_{\partial M}$ is held
fixed under the variation then $v_{ij}$ vanishes on $\partial M$ and
$\delta I_{GHY} = - \int_M v^{\alpha \beta} \left( R_{\alpha \beta}
- \frac12 R g_{\alpha \beta} \right) \: dV$ is an interior integral.
This was the motivation for Gibbons and Hawking to introduce
the second term on the right-hand side of (\ref{3.7}).

Suppose that $n > 2$.
We can say that with a fixed induced metric $g_{\partial M}$ on $\partial M$,
the critical points of $I_{GHY}$ are the Ricci-flat metrics on $M$
that induce $g_{\partial M}$. On the other hand, if we
consider all variations $v_{\alpha \beta}$ then the critical
points are the Ricci-flat metrics on $M$ with totally geodesic boundary.

\subsection{Weighted Gibbons-Hawking-York action} \label{subsection3.3}

Given $f \in C^\infty(M)$, consider the smooth metric-measure space
${\mathcal M} = \left( M, g, e^{-f} dV \right)$. As is now well understood,
the analog of the Ricci curvature for ${\mathcal M}$ is the
Bakry-Emery tensor 
\begin{equation} \label{3.13}
\Ric_\infty = \Ric + \Hess(f).
\end{equation}
(There is also
a notion of $\Ric_N$ for $N \in [1, \infty]$ but we only consider
the case $N = \infty$.) As explained by Perelman
\cite[Section 1.3]{Perelman (2002)}, the analog of the scalar curvature
is 
\begin{equation} \label{3.14}
R_\infty = R + 2 \triangle f - |\nabla f|^2.
\end{equation}

Considering the first variation formula for the integral of $e^{-f}$ over a
moving hypersurface, one sees that the analog of the mean curvature is
\begin{equation} \label{3.15}
H_\infty = H + e_0 f.
\end{equation} 
On the other hand, the analog of the
second fundamental form is just $A_\infty = A$.

If $\partial M = \emptyset$ then Perelman's ${\mathcal F}$-functional is
the weighted total scalar curvature 
${\mathcal F} = \int_M R_\infty \: e^{-f} \: dV$.

\begin{definition} \label{def2}
The weighted Gibbons-Hawking-York action is
\begin{equation} \label{3.16}
I_\infty(g,f) = \int_M R_\infty \: e^{-f} \: dV \: + \: 2 \:
\int_{\partial M} H_\infty \: e^{-f} \: dA.
\end{equation}
\end{definition}

We write a variation of $f$ as $\delta f = h$.
Note that
\begin{equation} \label{3.17}
\delta \left( e^{-f} \: dV \right) = \left( \frac{v}{2} - h \right) \:
e^{-f} \: dV.
\end{equation}

\begin{proposition} \label{prop2}
If $\frac{v}{2} - h = 0$ then
\begin{equation} \label{3.18}
\delta I_{\infty} = - \int_M v^{\alpha \beta} \left( R_{\alpha \beta} +
\nabla_\alpha \nabla_\beta f \right) \: e^{-f} \: dV \: - \:
\int_{\partial M} \left( v^{ij} A_{ij} + v^{00} (H + e_0 f) 
\right) \: e^{-f} \: dA. 
\end{equation}
\end{proposition}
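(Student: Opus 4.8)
The plan is to start from the definition (\ref{3.16}) and differentiate the two pieces separately, then combine, using the constraint $\frac{v}{2} - h = 0$ to simplify the measure variation. For the interior term $\int_M R_\infty \, e^{-f} \, dV$, I would write $R_\infty = R + 2\triangle f - |\nabla f|^2$ and vary each factor: the variation of $R$ is given by (\ref{3.1}), the variation of $2\triangle f - |\nabla f|^2$ under $\delta g = v$ and $\delta f = h$ is a standard (if slightly tedious) computation, and the variation of $e^{-f} \, dV$ is (\ref{3.17}), which by hypothesis equals zero. Thus the interior contribution reduces to $\int_M \delta(R_\infty) \, e^{-f} \, dV$. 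The point of the weighted setup is that after integration by parts the bulk term collapses to $- \int_M v^{\alpha\beta}(R_{\alpha\beta} + \nabla_\alpha\nabla_\beta f) \, e^{-f} \, dV$ plus a boundary divergence term; this is essentially Perelman's computation of $\delta {\mathcal F}$, and I would either cite it or reproduce the few lines. The novelty here is bookkeeping the boundary terms that Perelman discards because he works on closed manifolds.

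Next I would handle the boundary term $2\int_{\partial M} H_\infty \, e^{-f} \, dA = 2\int_{\partial M}(H + e_0 f) e^{-f} \, dA$. Here $\delta H$ is (\ref{3.5}), $\delta(dA)$ is (\ref{3.6}), $\delta(e_0)$ is (\ref{3.3}) (needed for $\delta(e_0 f)$), and $\delta f = h$. Again the $e^{-f}$ variation contributes a factor $\frac{v}{2} - h = 0$ wherever it multiplies the undifferentiated integrand, so the surviving terms are $2\int_{\partial M} \delta(H + e_0 f)\, e^{-f}\, dA$ plus the $\delta(dA)$ contribution $2\int_{\partial M}(H + e_0 f) \cdot \frac12 v_i^{\,i}\, e^{-f}\, dA$. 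The tangential-divergence pieces $\widehat{\nabla}_i v_0^{\,i}$ appearing in $\delta H$ and in $\delta(e_0 f)$ integrate to zero over the closed manifold $\partial M$ (Stokes), after accounting for the weight: one must integrate against $e^{-f}\, dA$, producing a correction $\int_{\partial M} v_0^{\,i}\, \widehat{\nabla}_i f \, e^{-f}\, dA$ that is designed to cancel against a matching term coming from $e_0 f$ and from the boundary divergence generated in the interior step.

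The main obstacle, and the real content of the proposition, is reconciling the boundary integrals coming from three sources: (i) the Stokes boundary term $-\int_{\partial M}(\nabla_\alpha v_0^{\,\alpha} - \nabla_0 v)\, e^{-f}\, dA$-type expression left over from varying $\int_M R_\infty e^{-f} dV$ (with the $e^{-f}$ weight now producing extra $v_0^{\,\alpha}\nabla_\alpha f$ terms through integration by parts of $\nabla_\alpha(v_0^{\,\alpha} e^{-f})$), (ii) the explicit boundary terms from $\delta H$, $\delta(dA)$, and (iii) the terms from $\delta(e_0 f)$. I would reduce all of these to the boundary frame using the preliminary identities (\ref{2.6}), (\ref{2.7}), (\ref{2.8}), and (\ref{2.2}), convert $\nabla_0$ derivatives of $v_{ij}$ and normal derivatives of $f$ into boundary-intrinsic quantities, and show that everything not proportional to $v^{ij} A_{ij}$ or $v^{00}(H + e_0 f)$ cancels in pairs. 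The cancellation is forced — this is exactly why the boundary term $2\int_{\partial M} H_\infty e^{-f} dA$ was added to ${\mathcal F}$ — so the check is bounded in length; the delicacy is purely in index manipulation and in tracking the signs introduced by $e_0$ being the \emph{inward} normal. Once the dust settles, the surviving terms assemble into (\ref{3.18}).
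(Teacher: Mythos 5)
Your overall route coincides with the paper's: vary the interior term and the boundary term of (\ref{3.16}) separately, keep the boundary contributions that Perelman discards, and dispose of the leftover tangential divergence by Stokes' theorem on the closed manifold $\partial M$ (in the paper this is the net copy of $\widehat{\nabla}_i \left( e^{-f} v^{0i} \right)$ surviving in (\ref{3.26})). However, there is a genuine error in your boundary bookkeeping. The hypothesis $\frac{v}{2} - h = 0$ kills $\delta \left( e^{-f} \, dV \right) = \left( \frac{v}{2} - h \right) e^{-f} \, dV$ in the interior, but it does \emph{not} make the weight variation harmless in the boundary integral: there the measure is $e^{-f} \, dA$, and $\delta \left( e^{-f} \, dA \right) = \left( \frac12 v_i^{\: \: i} - h \right) e^{-f} \, dA = - \frac12 v_0^{\: \: 0} \, e^{-f} \, dA$, since $h = \frac12 v = \frac12 \left( v_0^{\: \: 0} + v_i^{\: \: i} \right)$ while $\delta(dA)$ only sees the tangential trace. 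Your accounting --- declaring the $e^{-f}$ variation to vanish and then adding the $\delta(dA)$ contribution $\frac12 v_i^{\: \: i}$ --- in effect replaces the correct factor $- \frac12 v_0^{\: \: 0}$ by $+ \frac12 v_i^{\: \: i}$, i.e.\ drops the term $- h \, (H + e_0 f) \, e^{-f} \, dA = - \frac12 v \, (H + e_0 f) \, e^{-f} \, dA$.

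This is not a cosmetic slip. In the paper's computation (equations (\ref{3.24})--(\ref{3.25})) the factor $- \frac12 v_0^{\: \: 0} (H + e_0 f)$ coming from $\delta \left( e^{-f} dA \right)$ combines with the $- \frac12 H v_{00}$ inside $\delta H$ (equation (\ref{3.5})) and the $- \frac12 v_0^{\: \: 0} \nabla_0 f$ inside $\delta (e_0 f)$ to produce exactly the coefficient $- v^{00} (H + e_0 f)$ appearing in (\ref{3.18}); the remaining pieces $\widehat{\nabla}_i v^{0i} - v^{0i} \widehat{\nabla}_i f$ from the boundary variation are partly cancelled by the boundary term left over from the interior variation (equation (\ref{3.23})), and the single surviving copy equals $e^{f} \, \widehat{\nabla}_i \left( e^{-f} v^{0i} \right)$, which integrates to zero. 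With your bookkeeping the surplus $\frac12 v \, (H + e_0 f)$ is neither a tangential divergence nor of the form $v^{ij} A_{ij}$ or $v^{00} (H + e_0 f)$, so the ``forced cancellation'' you invoke does not occur and the calculation cannot assemble into (\ref{3.18}). Correct the measure variation on $\partial M$ and the rest of your outline goes through as in the paper. (A minor further correction: $\widehat{\nabla}_i v_0^{\: \: i}$ arises only from $\delta H$; what $\delta(e_0 f)$ contributes, by (\ref{3.3}), is $- \frac12 v_0^{\: \: 0} \nabla_0 f - v_0^{\: \: i} \widehat{\nabla}_i f + \frac12 \nabla_0 v$, not a tangential divergence.)
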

\begin{proof}
One can check that
\begin{align} \label{3.19}
\delta(\triangle f) & = \triangle h - 
\left( \nabla_\alpha v^{\alpha \beta} \right) \nabla_\beta f -
v^{\alpha \beta} \nabla_\alpha \nabla_\beta f + \frac12 \langle
\nabla f,
\nabla v \rangle \\
& = \frac12 \triangle v - 
\left( \nabla_\alpha v^{\alpha \beta} \right) \nabla_\beta f -
v^{\alpha \beta} \nabla_\alpha \nabla_\beta f + \frac12 \langle
\nabla f,
\nabla v \rangle \notag
\end{align}
and
\begin{equation} \label{3.20}
\delta \left(|\nabla f|^2 \right) = 2 \langle \nabla f, \nabla h \rangle
- v^{\alpha \beta} \nabla_\alpha f \nabla_\beta f
=  \langle \nabla f, \nabla v \rangle
- v^{\alpha \beta} \nabla_\alpha f \nabla_\beta f.
\end{equation}
Then
\begin{equation} \label{3.21}
\delta R_\infty = - v^{\alpha \beta} \left( R_{\alpha \beta} +
\nabla_\alpha \nabla_\beta f \right) \: e^{-f} \: 
+ \: \nabla_\beta \left( e^{-f} \left( \nabla_\alpha v^{\beta \alpha}
- v^{\beta \alpha} \nabla_\alpha f \right) \right).
\end{equation}
Hence
\begin{align} \label{3.22}
\delta \left( \int_M R_\infty \: e^{-f} \: dV \right)  = &
\int_M \delta (R_\infty) \: e^{-f} \: dV \\
= & - \int_M v^{\alpha \beta} \left( R_{\alpha \beta} + \nabla_\alpha
\nabla_\beta f \right) \: e^{-f} \: dV \: \notag \\
&- \int_{\partial M} \left( \nabla_\alpha v^{0\alpha} - v^{0 \alpha}
\nabla_\alpha f \right) \: e^{-f} \: dA. \notag
\end{align}
On the boundary,
\begin{equation} \label{3.23}
\nabla_\alpha v^{0\alpha} - v^{0 \alpha}
\nabla_\alpha f = \nabla_0 v^{00} - v^{00} (H + \nabla_0 f) + 
\widehat{\nabla}_i v^{0i} - v^{0i} \widehat{\nabla}_i f 
+ v^{ij} A_{ij}.
\end{equation}

Next, 
\begin{equation} \label{3.24}
\delta (e_0 f) = - \frac12 v_0^{\: \: 0} \: \nabla_0 f - v_0^{\: \: i} \:
\widehat{\nabla}_i f
+ \nabla_0 h = - \frac12 v_0^{\: \: 0} \: \nabla_0 f - v_0^{\: \: i} \: 
\widehat{\nabla}_i f
+ \frac12 \nabla_0 v
\end{equation}
and one finds that
\begin{align} \label{3.25}
\delta \int_{\partial M} H_\infty \: e^{-f} \: dA = &
\int_{\partial M} \delta H_\infty \: e^{-f} \: dA +
\int_{\partial M} H_\infty \: \left( - \frac{1}{2} v + \frac12 v_i^{\: \: i}
\right) \: e^{-f} \: dA  \\
= & \int_{\partial M} \left( \widehat{\nabla}_i v^{0i} - v^{0i} 
\widehat{\nabla}_i f - v^{00} (H + e_0 f) + \frac12 \nabla_0 v^{00}
\right) \: e^{-f} \: dA. \notag
\end{align}

Combining (\ref{3.22}), (\ref{3.23}) and (\ref{3.25}) gives
\begin{align} \label{3.26}
\delta I_\infty = & - \int_M v^{\alpha \beta} 
\left( R_{\alpha \beta} + \nabla_\alpha
\nabla_\beta f \right) \: e^{-f} \: dV \: 
- \: \int_{\partial M} 
\left( v^{ij} A_{ij} + v^{00} (H + e_0 f) \right)
\: e^{-f} \: dA \\
& + \: \int_{\partial M} \left( \widehat{\nabla}_i v^{0i} - v^{0i} 
\widehat{\nabla}_i f \right) \: e^{-f} \: dA \notag \\
= & - \int_M v^{\alpha \beta} 
\left( R_{\alpha \beta} + \nabla_\alpha
\nabla_\beta f \right) \: e^{-f} \: dV \: 
- \: \int_{\partial M} 
\left( v^{ij} A_{ij} + v^{00} (H + e_0 f) \right)
\: e^{-f} \: dA \notag \\
& + \: \int_{\partial M} \widehat{\nabla}_i \left( e^{-f} v^{0i} \right)
\: dA \notag \\
= & - \int_M v^{\alpha \beta} 
\left( R_{\alpha \beta} + \nabla_\alpha
\nabla_\beta f \right) \: e^{-f} \: dV \: 
- \: \int_{\partial M} 
\left( v^{ij} A_{ij} + v^{00} (H + e_0 f) \right)
\: e^{-f} \: dA. \notag
\end{align}
This proves the proposition.
\end{proof}

\begin{remark} \label{rmk1}
If $\partial M = \emptyset$ then Proposition \ref{prop2} appears in
\cite[Section 1.1]{Perelman (2002)}.
\end{remark}

The variations in Proposition \ref{prop2} all fix the measure
$e^{-f} \: dV$. If we also
fix an induced metric $g_{\partial M}$ on
$\partial M$ then the critical points of $I_\infty$ are
gradient steady solitons on $M$ that satisfy $H + e_0 f = 0$
on $\partial M$. On the other hand, if we allow variations
that do not fix the boundary metric then the critical points
are gradient steady solitons on $M$ with totally geodesic
boundary and for which $f$ satisfies Neumann boundary conditions.

\subsection{Time-derivative of the action} \label{subsection3.4}

\begin{assumption}
Hereafter we assume that $H + e_0 f = 0$ on $\partial M$.
\end{assumption}

Then on $\partial M$, we have
\begin{equation} \label{added1}
\nabla_i \nabla_j f = \widehat{\nabla}_i \widehat{\nabla}_j f +
H A_{ij} 
\end{equation}
and
\begin{equation} \label{3.27}
\nabla_i \nabla_0 f = - \widehat{\nabla}_i H + A_i^{\: \: k} \:
\widehat{\nabla}_k f.
\end{equation}

\begin{theorem} \label{thm2}
Under the assumptions 
\begin{equation} \label{3.28}
\frac{\partial g}{\partial t} = - 2 (\Ric + \Hess(f))
\end{equation}
and
\begin{equation} \label{3.29}
\frac{\partial f}{\partial t} = - \triangle f - R,
\end{equation}
we have
\begin{align} \label{3.30}
& \frac{dI_\infty}{dt} =  2 \int_{M} |\Ric + \Hess(f)|^2 \: e^{-f} \: dV \\
& + \: 2 \int_{\partial M}  
\left( \widehat{\triangle} H -
2 \langle \widehat{\nabla} f, \widehat{\nabla} H \rangle
+ A( \widehat{\nabla} f, \widehat{\nabla} f)
+ A^{ij} A_{ij} H + A^{ij} R_{ij} + 2 R^{0i}
\widehat{\nabla}_i f - \widehat{\nabla}_i R^{0i} \right)
\: e^{-f} \: dA. \notag 
\end{align}

If $\left( R_{ij} + \nabla_i \nabla_j f \right) \Big|_{\partial M} = 0$
and $\left( R_{i0} + \nabla_i \nabla_0 f \right) \Big|_{\partial M} = 0$ then
\begin{equation} \label{3.31}
\widehat{\triangle} H -
2 \langle \widehat{\nabla} f, \widehat{\nabla} H \rangle
+ A( \widehat{\nabla} f, \widehat{\nabla} f)
+ A^{ij} A_{ij} H + A^{ij} R_{ij} + 2 R^{0i}
\widehat{\nabla}_i f - \widehat{\nabla}_i R^{0i} = 0.
\end{equation}
\end{theorem}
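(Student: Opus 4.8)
The plan is to obtain (\ref{3.30}) by feeding the modified Ricci flow into the first--variation formula of Proposition \ref{prop2}, and then to obtain (\ref{3.31}) by substituting the two extra boundary hypotheses directly into the boundary integrand of (\ref{3.30}) and simplifying with the contracted Codazzi--Mainardi identity (\ref{2.4}). For the first part, take the infinitesimal variation to be the flow itself: $v_{\alpha\beta} = \frac{\partial g_{\alpha\beta}}{\partial t} = -2(R_{\alpha\beta} + \nabla_\alpha\nabla_\beta f)$ and $h = \frac{\partial f}{\partial t} = -\triangle f - R$. Tracing gives $v = -2(R + \triangle f) = 2h$, so the hypothesis $\frac{v}{2} - h = 0$ of Proposition \ref{prop2} holds and $\frac{dI_\infty}{dt}$ equals the right--hand side of (\ref{3.18}). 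There the interior integrand is $-v^{\alpha\beta}(R_{\alpha\beta}+\nabla_\alpha\nabla_\beta f) = 2|\Ric+\Hess(f)|^2$, giving the first term of (\ref{3.30}), while on $\partial M$ the term $v^{00}(H + e_0 f)$ vanishes by the standing assumption $H + e_0 f = 0$, leaving
\[
\frac{dI_\infty}{dt} = 2\int_M |\Ric+\Hess(f)|^2 \, e^{-f}\,dV + 2\int_{\partial M} (R^{ij} + \nabla^i\nabla^j f)\,A_{ij}\, e^{-f}\,dA .
\]

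It then remains to reconcile this boundary integral with the one in (\ref{3.30}). Using the commutation formula (\ref{added1}) to write $\nabla_i\nabla_j f = \widehat{\nabla}_i\widehat{\nabla}_j f + H A_{ij}$ on $\partial M$ produces the terms $A^{ij}R_{ij}$ and $H A^{ij}A_{ij}$ verbatim, so the whole matter reduces to the integral identity
\[
\int_{\partial M} A^{ij}\widehat{\nabla}_i\widehat{\nabla}_j f \, e^{-f}\,dA = \int_{\partial M}\!\Big(\widehat{\triangle} H - 2\langle\widehat{\nabla} f,\widehat{\nabla} H\rangle + A(\widehat{\nabla} f,\widehat{\nabla} f) + 2R^{0i}\widehat{\nabla}_i f - \widehat{\nabla}_i R^{0i}\Big) e^{-f}\,dA.
\]
I would prove this by integrating by parts on the closed manifold $\partial M$: on the left, move one $\widehat{\nabla}$ off $f$ and insert the contracted Codazzi identity $\widehat{\nabla}_i A^{ij} = \widehat{\nabla}^j H - R^{0j}$ coming from (\ref{2.4}); on the right, integrate $\widehat{\triangle} H$ and $\widehat{\nabla}_i R^{0i}$ by parts against the weight $e^{-f}$. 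Both sides collapse to $\int_{\partial M}\big(-\langle\widehat{\nabla} H,\widehat{\nabla} f\rangle + A(\widehat{\nabla} f,\widehat{\nabla} f) + R^{0i}\widehat{\nabla}_i f\big)\,e^{-f}\,dA$, which establishes (\ref{3.30}).

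For (\ref{3.31}), suppose in addition $(R_{ij}+\nabla_i\nabla_j f)\big|_{\partial M} = 0$ and $(R_{i0}+\nabla_i\nabla_0 f)\big|_{\partial M} = 0$. By (\ref{added1}) the first gives $R_{ij} = -\widehat{\nabla}_i\widehat{\nabla}_j f - H A_{ij}$, and by (\ref{3.27}) the second gives $R_{i0} = \widehat{\nabla}_i H - A_i^{\: \: k}\widehat{\nabla}_k f$, hence $R^{0i} = \widehat{\nabla}^i H - A^{ik}\widehat{\nabla}_k f$ and in particular $R^{0i}\widehat{\nabla}_i f = \langle\widehat{\nabla} H,\widehat{\nabla} f\rangle - A(\widehat{\nabla} f,\widehat{\nabla} f)$. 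Substituting the first relation into $A^{ij}R_{ij}$ cancels the $H A^{ij}A_{ij}$ term and replaces $A^{ij}R_{ij}$ by $-A^{ij}\widehat{\nabla}_i\widehat{\nabla}_j f$; substituting the expression for $R^{0i}$ into the remaining terms $2R^{0i}\widehat{\nabla}_i f$ and $\widehat{\nabla}_i R^{0i}$ --- using (\ref{2.4}) once more to treat $\widehat{\nabla}_i A^{ik}$ inside the divergence --- turns the six surviving terms into a sum in which $\widehat{\triangle} H$, $\langle\widehat{\nabla} H,\widehat{\nabla} f\rangle$, $A(\widehat{\nabla} f,\widehat{\nabla} f)$ and $A^{ij}\widehat{\nabla}_i\widehat{\nabla}_j f$ each cancel in pairs, leaving $0$.

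The computations are elementary but bookkeeping--heavy; the step I expect to cost the most care is matching the boundary integrand of (\ref{3.30}) with $(R^{ij}+\nabla^i\nabla^j f)A_{ij}$, since it interweaves an integration by parts carrying the weight $e^{-f}$, the contracted Codazzi identity, and the tangential Hessian formulas (\ref{added1}) and (\ref{3.27}). Once (\ref{3.30}) is in place, (\ref{3.31}) is a purely algebraic consequence of the two soliton--type boundary conditions.
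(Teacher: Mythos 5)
Your proposal is correct and follows essentially the same route as the paper: plug the modified Ricci flow into Proposition \ref{prop2} (with $v = 2h$ and the $v^{00}(H + e_0 f)$ term killed by the standing assumption $H + e_0 f = 0$), then convert $A^{ij}\left( R_{ij} + \nabla_i \nabla_j f \right)$ into the stated boundary integrand using (\ref{added1}), (\ref{3.27})/(\ref{2.4}) and integration by parts against the weight $e^{-f}$ on the closed manifold $\partial M$. The only organizational difference is that the paper packages the boundary computation as a single pointwise divergence identity (Lemma \ref{lem1}), from which (\ref{3.30}) follows by integration and (\ref{3.31}) follows immediately under the extra hypotheses, whereas you prove the integrated identity first and then verify (\ref{3.31}) by a separate, equally valid, direct substitution and cancellation.
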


\begin{proof}
Equations (\ref{3.28}) and (\ref{3.29}) imply that $e^{-f(t)} dV_{g(t)}$ is
constant in $t$.
Then Proposition \ref{prop2} implies that
\begin{equation} \label{3.32}
\frac{dI_\infty}{dt} = \: 2 \int_{M} |\Ric + \Hess(f)|^2 \: e^{-f} \: dV \:
+ \: 2 \int_{\partial M} A^{ij} \left( R_{ij} + {\nabla}_i {\nabla}_j f
\right) \: e^{-f} \: dA.
\end{equation}
 
\begin{lemma} \label{lem1}
On $\partial M$, 
\begin{align} \label{3.33}
& A^{ij} \left( R_{ij} + 
{\nabla}_i {\nabla}_j f \right) \: e^{-f} 
- \widehat{\nabla}_i \left( \left( R^{i0} + \nabla^i \nabla^0 f
\right) e^{-f} \right) =  \\
& \left( \widehat{\triangle} H -
2 \langle \widehat{\nabla} f, \widehat{\nabla} H \rangle
+ A( \widehat{\nabla} f, \widehat{\nabla} f)
+ A^{ij} A_{ij} H + A^{ij} R_{ij} + 2 R^{0i}
\widehat{\nabla}_i f - \widehat{\nabla}_i R^{0i} \right) \: e^{-f}. \notag
\end{align}
\end{lemma}
\begin{proof}
As
\begin{equation} \label{3.34}
A^{ij} (\widehat{\nabla}_i \widehat{\nabla}_j f) e^{-f} =
\widehat{\nabla}_i \left( A^{ij} (\widehat{\nabla}_j f) e^{-f} \right)
- \left(  \widehat{\nabla}_i  A^{ij} \right) \widehat{\nabla}_j f e^{-f}
+ A( \widehat{\nabla} f, \widehat{\nabla} f) e^{-f},
\end{equation}
we have
\begin{align} \label{3.35}
& A^{ij} \left(  R_{ij} + 
{\nabla}_i {\nabla}_j f
\right) \: e^{-f} = \\
& \left( A^{ij} R_{ij} + A^{ij} 
\widehat{\nabla}_i \widehat{\nabla}_j f + A^{ij} A_{ij} H
\right) \: e^{-f} = \notag \\
& \left( A^{ij} R_{ij} - \left( \widehat{\nabla}_i A^{ij} \right) 
\widehat{\nabla}_j f + A( \widehat{\nabla} f, \widehat{\nabla} f)
+ A^{ij} A_{ij} H \right) \: e^{-f} \notag \\
& + \widehat{\nabla}_i \left( A^{ij} (\widehat{\nabla}_j f) e^{-f} \right). 
\notag
\end{align}
Adding
\begin{equation} \label{3.36}
0 = \left( \widehat{\triangle} H -
\langle \widehat{\nabla} f, \widehat{\nabla} H \rangle \right) e^{-f} - 
\widehat{\nabla}_i \left( e^{-f}
\widehat{\nabla}^i H \right)
\end{equation}
and using (\ref{2.4}) gives
\begin{align} \label{3.37}
& A^{ij} \left( R_{ij} + 
{\nabla}_i {\nabla}_j f 
\right) \: e^{-f} = \\
& \left( \widehat{\triangle} H -
\langle \widehat{\nabla} f, \widehat{\nabla} H \rangle + 
A^{ij} R_{ij} - \left( \widehat{\nabla}_i A^{ij} \right) 
\widehat{\nabla}_j f + A( \widehat{\nabla} f, \widehat{\nabla} f)
+ A^{ij} A_{ij} H \right) \: e^{-f} \notag \\
& + \widehat{\nabla}_i \left( \left( A^{ij} \widehat{\nabla}_j f -
\widehat{\nabla}^i H \right) e^{-f} \right) = \notag \\
& \left( \widehat{\triangle} H -
2 \langle \widehat{\nabla} f, \widehat{\nabla} H \rangle + 
A^{ij} R_{ij} + R^{0j}
\widehat{\nabla}_j f + A( \widehat{\nabla} f, \widehat{\nabla} f)
+ A^{ij} A_{ij} H \right) \: e^{-f} \notag \\
& + \widehat{\nabla}_i \left( \left( A^{ij} \widehat{\nabla}_j f -
\widehat{\nabla}^i H \right) e^{-f} \right). \notag
\end{align}
Using (\ref{3.27}), 
\begin{align} \label{3.38}
\widehat{\nabla}_i \left( \left( A^{ij} \widehat{\nabla}_j f -
\widehat{\nabla}^i H \right) e^{-f} \right) = &
\widehat{\nabla}_i \left( \left( R^{i0} + \nabla^i \nabla^0 f
\right) e^{-f} \right)
- \widehat{\nabla}_i \left( R^{0i} e^{-f} \right) \\
= & \widehat{\nabla}_i \left( \left( R^{i0} + \nabla^i \nabla^0 f
\right) e^{-f} \right) \notag \\
& + 
\left( - \widehat{\nabla}_i R^{0i} + 
R^{0i} \widehat{\nabla}_i f \right) e^{-f}. \notag
\end{align}
The lemma follows.
\end{proof}

Equation (\ref{3.30}) follows from (\ref{3.32}), along with
integrating both sides of
(\ref{3.33}) over $\partial M$. 
If $\left( R_{ij} + \nabla_i \nabla_j f \right) \Big|_{\partial M} = 0$
and $\left( R_{i0} + \nabla_i \nabla_0 f \right) \Big|_{\partial M} = 0$ then
(\ref{3.31}) follows from (\ref{3.33}).
\end{proof}

\section{Evolution equations for the
boundary geometry under a modified Ricci flow}
\label{section4}

In this section we consider a manifold-with-boundary whose Riemannian metric
evolves by the modified Ricci flow.
We derive the evolution equations for the 
second fundamental form and the mean curvature of the boundary.

\begin{theorem} \label{thm3}
Under the assumptions 
\begin{equation} \label{4.1}
\frac{\partial g}{\partial t} = - 2 (\Ric + \Hess(f))
\end{equation}
and
\begin{equation} \label{4.2}
\frac{\partial f}{\partial t} = - \triangle f - R,
\end{equation}
on $\partial M$ we have
\begin{equation} \label{geqn}
\frac{\partial g_{ij}}{\partial t} = - 
({\mathcal L}_{\widehat{\nabla} f} g)_{ij}
- 2 R_{ij} - 2 H A_{ij},
\end{equation}
\begin{align} \label{4.3}
\frac{\partial A_{ij}}{\partial t} = & (\widehat{\triangle} A)_{ij}
- ({\mathcal L}_{\widehat{\nabla} f} A)_{ij} - A^k_{\: \: i} \:
R^l_{\: \: klj} -
A^k_{\: \: j} \: R^l_{\: \: kli} + 2 A^{kl} R_{kilj} \\
& - 2 H A_{ik} A^k_{\: \: j}
+ A^{kl} A_{kl} A_{ij} + \nabla_0 R_{0i0j} \notag
\end{align}
and
\begin{equation} \label{4.4}
\frac{\partial H}{\partial t} = \widehat{\triangle} H -
\langle \widehat{\nabla} f, \widehat{\nabla} H \rangle + 2 A^{ij} R_{ij} + 
A^{ij} A_{ij} H + \nabla_0 R_{00}.
\end{equation}
\end{theorem}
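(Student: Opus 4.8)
The plan is to derive all three equations from the variational formulas of Subsection \ref{subsection3.1}: under the modified Ricci flow the manifold-with-boundary $M$ is fixed and only the metric varies, so $A$ and $H$ are smooth paths of tensor fields on the fixed hypersurface $\partial M$, and their $t$-derivatives are the corresponding metric variations evaluated at $v_{\alpha \beta} = \partial_t g_{\alpha \beta} = - 2 \left( R_{\alpha \beta} + \nabla_\alpha \nabla_\beta f \right)$. At a fixed time and a fixed point of $\partial M$ I would choose the boundary-normal coordinates of Section \ref{section2} adapted to the metric at that instant, so that (\ref{2.1})--(\ref{2.8}), (\ref{added1}) and (\ref{3.27}) all apply there; since $\partial_t A$ and $\partial_t H$ are genuine tensors on $\partial M$, the resulting identities are coordinate-free. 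Equation (\ref{geqn}) is then immediate: on $\partial M$ we have $\partial_t g_{ij} = v_{ij} = - 2 R_{ij} - 2 \nabla_i \nabla_j f$, and (\ref{added1}) gives $\nabla_i \nabla_j f = \widehat{\nabla}_i \widehat{\nabla}_j f + H A_{ij}$; since $\widehat{\nabla} f$ is a gradient field its covariant Hessian is symmetric, so $2 \widehat{\nabla}_i \widehat{\nabla}_j f = ({\mathcal L}_{\widehat{\nabla} f} g)_{ij}$, which yields (\ref{geqn}).

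For (\ref{4.3}) I would substitute $v_{\alpha \beta} = - 2 \left( R_{\alpha \beta} + \nabla_\alpha \nabla_\beta f \right)$ into the variational formula (\ref{3.4}) for $\delta A_{ij}$, obtaining on $\partial M$
\begin{equation*}
\partial_t A_{ij} = - \nabla_i \left( R_{0j} + \nabla_0 \nabla_j f \right) - \nabla_j \left( R_{0i} + \nabla_0 \nabla_i f \right) + \nabla_0 \left( R_{ij} + \nabla_i \nabla_j f \right) - A_{ij} \left( R_{00} + \nabla_0 \nabla_0 f \right),
\end{equation*}
and then process the right-hand side in four stages. First, eliminate the mixed derivative $\nabla_0 \nabla_j f$ with (\ref{3.27}) and convert each ambient covariant derivative that falls on a tangential index into a $\widehat{\nabla}$-derivative plus second-fundamental-form corrections via (\ref{2.5}), (\ref{2.7}) and (\ref{2.8}). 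Second, turn the resulting $\widehat{\nabla}_i \widehat{\nabla}_j H$ terms into $(\widehat{\triangle} A)_{ij}$ by writing $\widehat{\nabla}_j H = g^{kl} \widehat{\nabla}_j A_{kl}$ and commuting the two boundary covariant derivatives — a Simons-type step, equivalent to two applications of the Codazzi equation (\ref{2.3}) — which produces $(\widehat{\triangle} A)_{ij}$ together with intrinsic curvature terms; rewriting the latter in ambient curvature by the Gauss equation yields $- A^k_{\: \: i} R^l_{\: \: klj} - A^k_{\: \: j} R^l_{\: \: kli} + 2 A^{kl} R_{kilj}$ and the cubic terms $- 2 H A_{ik} A^k_{\: \: j} + A^{kl} A_{kl} A_{ij}$. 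Third, decompose the ambient Ricci tensor as $R_{ij} = R_{0i0j} + g^{kl} R_{kilj}$, use (\ref{2.6}) to commute $\nabla_0$ past the contraction, and apply the second Bianchi identity to $\nabla_0 R_{kilj}$; the divergence-type pieces it produces combine, through (\ref{2.4}), with the terms $- \nabla_i R_{0j}$ and $- \nabla_j R_{0i}$, leaving exactly $\nabla_0 R_{0i0j}$. Fourth, collect all remaining terms carrying derivatives of $f$; using (\ref{added1}) and (\ref{3.27}) repeatedly, they organize into $- ({\mathcal L}_{\widehat{\nabla} f} A)_{ij}$, which assembles (\ref{4.3}).

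Equation (\ref{4.4}) then follows by tracing (\ref{4.3}) over $i$ and $j$: since $\widehat{\nabla} g = 0$ the trace commutes with $\widehat{\triangle}$, giving $\widehat{\triangle} H$; the trace of $({\mathcal L}_{\widehat{\nabla} f} A)_{ij}$ is $\langle \widehat{\nabla} f, \widehat{\nabla} H \rangle + 2 A^{ij} \widehat{\nabla}_i \widehat{\nabla}_j f$; (\ref{2.6}) gives $g^{ij} \nabla_0 R_{0i0j} = \nabla_0 R_{00}$; and the Gauss equation together with (\ref{added1}) simplifies the remaining curvature and cubic terms, producing (\ref{4.4}). (One can equally obtain (\ref{4.4}) directly from the variational formula (\ref{3.5}) for $\delta H$ with the same $v$.) I expect the main obstacle to lie in the second and third stages of the computation of $\partial_t A_{ij}$: extracting the tangential Laplacian $(\widehat{\triangle} A)_{ij}$ from the Hessian of $H$ via the Simons/Codazzi manipulation, and isolating the single surviving normal-curvature term $\nabla_0 R_{0i0j}$ through the second Bianchi identity, all while correctly tracking the many second-fundamental-form and $\nabla f$ corrections generated in passing between $\nabla$ and $\widehat{\nabla}$. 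The rest is routine tensor algebra.
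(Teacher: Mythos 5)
Your plan is essentially the paper's own proof: equation (\ref{geqn}) from (\ref{added1}) exactly as you say, and (\ref{4.3}) by substituting $v_{\alpha\beta}=-2(R_{\alpha\beta}+\nabla_\alpha\nabla_\beta f)$ into (\ref{3.4}), using (\ref{3.27}), (\ref{added1}) and the commutation identities to handle the $f$-terms, and then a Simons-type identity to trade $\widehat{\nabla}_i\widehat{\nabla}_j H$ for $(\widehat{\triangle}A)_{ij}$; the paper simply quotes this identity in the form (\ref{4.9}) (which already has the Bianchi-processed terms $\widehat{\nabla}_iR_{j0}+\widehat{\nabla}_jR_{i0}-\nabla_0R_{ij}+\nabla_0R_{0i0j}$ built in) and then uses (\ref{2.5}), so your ``stages 2--3'' are the explicit derivation of what the paper cites, not a different route. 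Note also that both arguments rely on the standing boundary condition $H+e_0f=0$, which is what makes (\ref{added1}) and (\ref{3.27}) available.

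One caution on your last step: (\ref{4.4}) does \emph{not} follow by tracing (\ref{4.3}) with a fixed $g^{ij}$. Since $H=g^{ij}A_{ij}$ and the metric is evolving, you must add the contribution $(\partial_t g^{ij})A_{ij}=2A^{ij}\left(R_{ij}+\widehat{\nabla}_i\widehat{\nabla}_jf+HA_{ij}\right)$ coming from (\ref{geqn}), as the paper does in (\ref{4.11}); this is precisely what produces the $+2A^{ij}R_{ij}$ term, flips $-A^{ij}A_{ij}H$ (the naive trace of the cubic terms) to $+A^{ij}A_{ij}H$, and cancels the $2A^{ij}\widehat{\nabla}_i\widehat{\nabla}_jf$ from the trace of the Lie-derivative term, leaving only $-\langle\widehat{\nabla}f,\widehat{\nabla}H\rangle$. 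Your appeal to ``the Gauss equation together with (\ref{added1})'' cannot supply these terms -- indeed the trace of the curvature block $-A^k_{\:\:i}R^l_{\:\:klj}-A^k_{\:\:j}R^l_{\:\:kli}+2A^{kl}R_{kilj}$ is zero. Your parenthetical alternative, computing $\partial_tH$ directly from (\ref{3.5}), is fine because (\ref{3.5}) already incorporates the metric variation; either way the inverse-metric term must be accounted for.
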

\begin{proof}
Using (\ref{added1}),
\begin{align}
\frac{\partial g_{ij}}{\partial t} = & - 2 R_{ij} - 2 \nabla_i \nabla_j f \\
= & - 2R_{ij} - 2 \widehat{\nabla}_i \widehat{\nabla}_j  f - 2 H A_{ij} 
\notag \\
= & - ({\mathcal L}_{\widehat{\nabla}f} g)_{ij} - 2R_{ij} - 2 H A_{ij}. 
\notag 
\end{align}

Next, from (\ref{3.4}),
\begin{align} \label{4.5}
\frac{\partial A_{ij}}{\partial t} = & - \nabla_i \left(
R_{j0} + \nabla_j \nabla_0 f \right) - \nabla_j \left(
R_{i0} + \nabla_i \nabla_0 f \right) + \nabla_0 \left(
R_{ij} + \nabla_i \nabla_j f \right) \\
& - A_{ij} \left( R_{00} + \nabla_0 \nabla_0 f \right). \notag
\end{align}
Now
\begin{equation} \label{4.6}
\nabla_0 \nabla_i \nabla_j f - \nabla_j \nabla_i \nabla_0 f =
\nabla_0 \nabla_j \nabla_i f - \nabla_j \nabla_0 \nabla_i f =
- R^k_{\: \: i0j} \widehat{\nabla}_k f - R^0_{\: \: i0j} \nabla_0 f 
\end{equation}
and
\begin{align} \label{4.7}
\nabla_i \nabla_j \nabla_0 f = & \widehat{\nabla}_i \nabla_j \nabla_0 f
- \Gamma^0_{\: \: ji} \nabla_0 \nabla_0 f - \Gamma^k_{\: \: 0i} 
\nabla_j \nabla_k f \\
= & \widehat{\nabla}_i \left( \widehat{\nabla}_j \nabla_0 f + A^k_{\: \: j}
\widehat{\nabla}_k f \right)
- A_{ij} \nabla_0 \nabla_0 f + A^k_{\: \: i} 
\left( \widehat{\nabla}_j \widehat{\nabla}_k f + H A_{jk} \right) 
\notag \\
= & - \widehat{\nabla}_i \widehat{\nabla}_j H + 
\left( \widehat{\nabla}_i A^k_{\: \: j} \right)
\widehat{\nabla}_k f + A^k_{\: \: j} \: \widehat{\nabla}_i
\widehat{\nabla}_k f 
- A_{ij} \nabla_0 \nabla_0 f \notag \\
& + A^k_{\: \: i} \:
\widehat{\nabla}_j \widehat{\nabla}_k f + H A^k_{\: \: i}  A_{jk}.
\notag  
\end{align}
Then
\begin{align} \label{4.8}
\frac{\partial A_{ij}}{\partial t} = & - \nabla_i R_{j0} - \nabla_j R_{i0} +
\nabla_0 R_{ij} - A_{ij} \left( R_{00} + \nabla_0 \nabla_0 f \right)
- R^k_{\: \: i0j} \widehat{\nabla}_k f + H R^0_{\: \: i0j} \\
& + \widehat{\nabla}_i \widehat{\nabla}_j H - 
\left( \widehat{\nabla}_i A^k_{\: \: j} \right)
\widehat{\nabla}_k f - A^k_{\: \: j} \: \widehat{\nabla}_i
\widehat{\nabla}_k f 
+ A_{ij} \nabla_0 \nabla_0 f \notag \\
& - A^k_{\: \: i} \:
\widehat{\nabla}_j \widehat{\nabla}_k f - H A^k_{\: \: i}  A_{jk} \notag \\
= & \: \widehat{\nabla}_i \widehat{\nabla}_j H - 
\left( \widehat{\nabla}_k A_{ij} \right)
\widehat{\nabla}_k f - A^k_{\: \: j} \: \widehat{\nabla}_i
\widehat{\nabla}_k f - A^k_{\: \: i} \: \widehat{\nabla}_j
\widehat{\nabla}_k f \notag \\
& - \nabla_i R_{j0} - \nabla_j R_{i0} + \nabla_0 R_{ij} 
- A_{ij} R_{00} + H R^0_{\: \: i0j} - H A^k_{\: \: i} A_{jk} \notag \\
= & \: \widehat{\nabla}_i \widehat{\nabla}_j H - 
\left( {\mathcal L}_{\widehat{\nabla} f} A \right)_{ij}
- \nabla_i R_{j0} - \nabla_j R_{i0} + \nabla_0 R_{ij} 
- A_{ij} R_{00} + H R^0_{\: \: i0j} - H A^k_{\: \: i} A_{jk} \notag
\end{align}
A form of Simons' identity \cite[Theorem 4.2.1]{Simons (1968)} says that
\begin{align} \label{4.9}
\widehat{\nabla}_i \widehat{\nabla}_j  H = & (\widehat{\triangle} A)_{ij}
+ \widehat{\nabla}_i R_{j0} + \widehat{\nabla}_j  R_{i0} -
\nabla_0 R_{ij} \\
& +  A^k_{\: \: i} R_{0k0j} + A^k_{\: \: j} R_{0k0i} - 
A_{ij} R_{00} + 2 A^{kl} R_{kilj} \notag \\
& - H R_{0i0j} - H A^k_{\: \: i} A_{jk}
+ A^{kl} A_{kl} A_{ij} + \nabla_0 R_{0i0j}. \notag
\end{align}
(As a check, one can easily show that the contraction of both sides of
(\ref{4.9}) with $g^{ij}$ is the same.)
Then using (\ref{2.5}),
\begin{align} \label{4.10}
\frac{\partial A_{ij}}{\partial t} = & (\widehat{\triangle} A)_{ij}
- \left( {\mathcal L}_{\widehat{\nabla} f} A \right)_{ij} - 
\left( \nabla_i R_{j0} - \widehat{\nabla}_i R_{j0} \right) - 
\left( \nabla_j R_{i0} - \widehat{\nabla}_j R_{i0} \right) \\
& - 2 A_{ij} R_{00}  + A^k_{\: \: i} R_{0k0j} + A^k_{\: \: j} R_{0k0i} 
+ 2 A^{kl} R_{kilj} \notag \\
& - 2 H A^k_{\: \: i} A_{jk}
+ A^{kl} A_{kl} A_{ij} + \nabla_0 R_{0i0j} \notag \\
= & (\widehat{\triangle} A)_{ij}
- ({\mathcal L}_{\widehat{\nabla} f} A)_{ij} - A^k_{\: \: i} R^l_{\: \: klj} -
A^k_{\: \: j} R^l_{\: \: kli} + 2 A^{kl} R_{kilj} \notag \\
& - 2 H A_{ik} A^k_{\: \: j}
+ A^{kl} A_{kl} A_{ij} + \nabla_0 R_{0i0j}. \notag
\end{align}
This proves the evolution equation for $A$.

Then
\begin{align} \label{4.11}
\frac{\partial H}{\partial t} = & \frac{\partial}{\partial t}
\left( g^{ij} A_{ij} \right) = 
2 \left( R_{ij} + \widehat{\nabla}_i \widehat{\nabla}_j f + H A_{ij} \right)
A^{ij} + g^{ij} \frac{\partial A_{ij}}{\partial t} \\
=  & 2 A^{ij} R_{ij} + \widehat{\triangle} H - 
\left( g^{ij} ( {\mathcal L}_{\widehat{\nabla} f} A)_{ij} 
- 2 A^{ij} \widehat{\nabla}_i
\widehat{\nabla}_j f \right) + A^{ij} A_{ij} H + g^{ij} 
\nabla_0 R_{0i0j} \notag \\
= & \widehat{\triangle} H -
\langle \widehat{\nabla} f, \widehat{\nabla} H \rangle + 2 A^{ij} R_{ij} + 
A^{ij} A_{ij} H + \nabla_0 R_{00}. \notag
\end{align}
This proves the theorem.
\end{proof}

\begin{proposition} \label{prop3}
Under the assumptions 
\begin{equation} \label{4.12}
\frac{\partial g}{\partial t} = - 2 (\Ric + \Hess(f))
\end{equation}
and
\begin{equation} \label{4.13}
\frac{\partial f}{\partial t} = - \triangle f - R,
\end{equation}
we have
\begin{align} \label{4.14}
& \frac{dI_\infty}{dt} =  2 \int_{M} |\Ric + \Hess(f)|^2 \: e^{-f} \: dV \\
& + \: 2 \int_{\partial M}  
\left( \frac{\partial H}{\partial t} -
\langle \widehat{\nabla} f, \widehat{\nabla} H \rangle
+ A( \widehat{\nabla} f, \widehat{\nabla} f)
+ 2 R^{0i} \widehat{\nabla}_i f - \frac12 \nabla_0 R - H R_{00}
\right)
\: e^{-f} \: dA. \notag 
\end{align}

If $\left( R_{ij} + \nabla_i \nabla_j f \right) \Big|_{\partial M} = 0$
and $\left( R_{i0} + \nabla_i \nabla_0 f \right) \Big|_{\partial M} = 0$ then
\begin{equation} \label{4.15}
\frac{\partial H}{\partial t} -
\langle \widehat{\nabla} f, \widehat{\nabla} H \rangle
+ A( \widehat{\nabla} f, \widehat{\nabla} f)
+ 2 R^{0i} \widehat{\nabla}_i f - \frac12 \nabla_0 R - H R_{00} = 0.
\end{equation}
\end{proposition}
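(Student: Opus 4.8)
The plan is to deduce the proposition directly from Theorem~\ref{thm2} by feeding in the evolution equation (\ref{4.4}) for $H$ together with the contracted second Bianchi identity. Note first that the hypotheses (\ref{4.12})--(\ref{4.13}) coincide with (\ref{3.28})--(\ref{3.29}) and with (\ref{4.1})--(\ref{4.2}), so both Theorem~\ref{thm2} and Theorem~\ref{thm3} apply. I would show that the boundary integrand in (\ref{3.30}) agrees \emph{pointwise} on $\partial M$ with the boundary integrand in (\ref{4.14}). Once this is established, (\ref{4.14}) is immediate from (\ref{3.30}), and (\ref{4.15}) is immediate from (\ref{3.31}), since the two vanishing statements are imposed under the same conditions $\left( R_{ij} + \nabla_i \nabla_j f \right) \Big|_{\partial M} = 0$ and $\left( R_{i0} + \nabla_i \nabla_0 f \right) \Big|_{\partial M} = 0$.

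For the pointwise identity, I would first rewrite (\ref{4.4}) as
\[
\widehat{\triangle} H + A^{ij} A_{ij} H = \frac{\partial H}{\partial t} + \langle \widehat{\nabla} f, \widehat{\nabla} H \rangle - 2 A^{ij} R_{ij} - \nabla_0 R_{00},
\]
and substitute this for $\widehat{\triangle} H + A^{ij} A_{ij} H$ in the boundary integrand of (\ref{3.30}); after cancellation that integrand becomes
\[
\frac{\partial H}{\partial t} - \langle \widehat{\nabla} f, \widehat{\nabla} H \rangle + A( \widehat{\nabla} f, \widehat{\nabla} f) - A^{ij} R_{ij} + 2 R^{0i} \widehat{\nabla}_i f - \nabla_0 R_{00} - \widehat{\nabla}_i R^{0i}.
\]
It then remains to check the on-boundary identity
\[
\widehat{\nabla}_i R^{0i} + \nabla_0 R_{00} = \frac12 \nabla_0 R + H R_{00} - A^{ij} R_{ij},
\]
whereupon substituting it turns the previous display into exactly the boundary integrand of (\ref{4.14}).

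To prove this last identity I would use the contracted second Bianchi identity $\nabla_\alpha R^{\alpha}_{\: \: 0} = \frac12 \nabla_0 R$, split as $\nabla_\alpha R^{\alpha}_{\: \: 0} = \nabla_0 R^{0}_{\: \: 0} + \nabla_i R^{i}_{\: \: 0}$. In the coordinates (\ref{2.1}) one has $g_{00} = 1$ and $g_{0i} = 0$, so $\nabla_0 R^{0}_{\: \: 0} = \nabla_0 R_{00}$; expanding $\nabla_i R^{i}_{\: \: 0}$ with the Christoffel symbols recorded in (\ref{2.2}) (in particular $\Gamma^0_{\: \: ij} = A_{ij}$, $\Gamma^k_{\: \: 0i} = - A^k_{\: \: i}$, $\Gamma^i_{\: \: i0} = - H$ and $\Gamma^0_{\: \: 0i} = 0$) gives, exactly as in the computation (\ref{2.5}), $\nabla_i R^{i}_{\: \: 0} = \widehat{\nabla}_i R^{0i} - H R_{00} + A^{ij} R_{ij}$. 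Combining the two pieces yields the claimed identity. The whole argument is thus bookkeeping: Theorem~\ref{thm2} rewritten through Theorem~\ref{thm3}, and the only genuine calculation is this Bianchi-identity step, which mirrors (\ref{2.5}); I do not expect any real obstacle.
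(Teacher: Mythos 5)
Your proposal is correct and follows essentially the same route as the paper: substitute the evolution equation (\ref{4.4}) of Theorem~\ref{thm3} into the boundary integrand of Theorem~\ref{thm2} (this is the paper's (\ref{4.16})), then use the contracted second Bianchi identity expanded along $\partial M$ exactly as in (\ref{2.5}), which is the paper's (\ref{4.17}). The bookkeeping and signs in your pointwise identity check out, so nothing further is needed.
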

\begin{proof}
From Theorem \ref{thm3},
\begin{align} \label{4.16}
& \widehat{\triangle} H -
2 \langle \widehat{\nabla} f, \widehat{\nabla} H \rangle
+ A( \widehat{\nabla} f, \widehat{\nabla} f)
+ A^{ij} A_{ij} H + A^{ij} R_{ij} + 2 R^{0i}
\widehat{\nabla}_i f - \widehat{\nabla}_i R^{0i} = \\
& \frac{\partial H}{\partial t} -
\langle \widehat{\nabla} f, \widehat{\nabla} H \rangle
+ A( \widehat{\nabla} f, \widehat{\nabla} f)
- A^{ij} R_{ij} + 2 R^{0i}
\widehat{\nabla}_i f - \widehat{\nabla}_i R^{0i} - \nabla_0 R^{00}. \notag 
\end{align}
From the second contracted Bianchi identity,
\begin{align} \label{4.17}
\frac12 \nabla^0 R = \nabla_i R^{i0} + \nabla_0 R^{00} =
\widehat{\nabla}_i R^{i0} + A^{ij} R_{ij} - H R^{00} + \nabla_0 R^{00}.
\end{align}
The proposition now follows from Theorem \ref{thm2}.
\end{proof}

\section{Hypersurfaces in a Ricci flow background} \label{section5}

In this section we consider mean curvature flow in a Ricci flow
background. Mean curvature flow in a fixed Riemannian manifold
was considered in \cite{Huisken (1986)}.

In Subsection \ref{subsection5.1} we translate the results of the previous
sections from a fixed manifold-with-boundary, equipped with a
modified Ricci flow, to an evolving hypersurface in a Ricci flow
solution.  

Starting in Subsection \ref{subsection5.2}, we consider mean curvature flow in
a gradient Ricci soliton background. We define what it means for a
hypersurface to be a mean curvature soliton.
We show that the differential 
Harnack-type expression vanishes on mean curvature
solitons.

In Subsection \ref{subsection5.3} we give two proofs of the monotonicity of
a Huisken-type functional. The first proof, which is calculational,
is essentially the same as the one in \cite{Magni-Mantegazza-Tsatis (2009)}.
The second proof is noncalculational.

\subsection{Mean curvature flow in a general Ricci flow background}
\label{subsection5.1}

Let $M$ be a smooth $n$-dimensional manifold and let $g(\cdot)$ satisfy the
Ricci flow equation $\frac{\partial g}{\partial t} = - 2 \Ric$. Given an
$(n-1)$-dimensional manifold $\Sigma$, let $\{e(\cdot)\}$ be a smooth
one-parameter family of immersions of $\Sigma$ in $M$. We write
$\Sigma_t$ for the image of $\Sigma$ under $e(t)$, and consider
$\{\Sigma_t\}$ to be a $1$-parameter family of parametrized 
hypersurfaces in $M$. Suppose that $\{\Sigma_t\}$ evolves by the
mean curvature flow 
\begin{equation}
\frac{dx}{dt} = H e_0.
\end{equation}

\begin{proposition} \label{prop4}
We have
\begin{equation} \label{geqn2}
\frac{\partial g_{ij}}{\partial t} = - 2 R_{ij} - 2 H A_{ij},
\end{equation}
\begin{align} \label{5.1}
\frac{\partial A_{ij}}{\partial t} = & (\widehat{\triangle} A)_{ij}
- A^k_{\: \: i} R^l_{\: \: klj} -
A^k_{\: \: j} R^l_{\: \: kli} + 2 A^{kl} R_{kilj} \\
& - 2 H A_{ik} A^k_{\: \: j}
+ A^{kl} A_{kl} A_{ij} + \nabla_0 R_{0i0j} \notag
\end{align}
and 
\begin{equation} \label{5.2}
\frac{\partial H}{\partial t} = \widehat{\triangle} H + 2 A^{ij} R_{ij} + 
A^{ij} A_{ij} H + \nabla_0 R_{00}.
\end{equation}
\end{proposition}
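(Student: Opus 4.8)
The plan is to deduce Proposition \ref{prop4} from Theorem \ref{thm3} by the device announced in the introduction: a mean curvature flow of hypersurfaces $\Sigma_t$ in a Ricci flow background $(M,g(t))$ is, after pulling back by the time-dependent diffeomorphisms generated by $-\nabla f$, the same data as a fixed manifold-with-boundary carrying the \emph{modified} Ricci flow $\partial_t g = -2(\Ric + \Hess f)$ with the boundary condition $H + e_0 f = 0$. Concretely, I would first set up this correspondence carefully: let $\Phi_t$ be the flow of $-\nabla f(\cdot,t)$, and observe that if $\widetilde g(t) = \Phi_t^* g(t)$ then $\partial_t \widetilde g = -2(\Ric(\widetilde g) + \Hess_{\widetilde g} \widetilde f)$ where $\widetilde f = f\circ\Phi_t$, while $\widetilde f$ satisfies the conjugate heat equation $\partial_t \widetilde f = -\triangle\widetilde f - R$ pulled back. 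The boundary $\partial M$ of the abstract manifold maps to $\Sigma_t$ under $e(t)$, and the mean curvature flow condition $\frac{dx}{dt} = H e_0$ on $\Sigma_t$ corresponds exactly to the statement that, in the $\widetilde g$ picture, $\partial M$ is \emph{stationary} as a subset — which is what lets us apply Theorem \ref{thm3}, whose hypotheses assume a fixed $M$.

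Next I would carry out the actual substitution. All the geometric quantities that appear in \eqref{geqn2}, \eqref{5.1}, \eqref{5.2} — namely $g_{ij}$, $A_{ij}$, $H$, the intrinsic Laplacian $\widehat\triangle$, and the ambient curvature terms $R_{ij}$, $R^l_{\;\;klj}$, $R_{kilj}$, $\nabla_0 R_{0i0j}$, $\nabla_0 R_{00}$ — are \emph{intrinsic/ambient geometric data that do not involve $f$}. So the diffeomorphism invariance of these tensors means their values on $\Sigma_t\subset M$ equal the corresponding values for $\partial M$ in the $\widetilde g$ picture. Therefore I can read off \eqref{geqn2}, \eqref{5.1}, \eqref{5.2} from \eqref{geqn}, \eqref{4.3}, \eqref{4.4} simply by discarding every term containing $f$: in \eqref{geqn} the Lie-derivative term $-({\mathcal L}_{\widehat\nabla f} g)_{ij}$ disappears; in \eqref{4.3} the term $-({\mathcal L}_{\widehat\nabla f} A)_{ij}$ disappears, leaving \eqref{5.1} verbatim; in \eqref{4.4} the term $-\langle \widehat\nabla f,\widehat\nabla H\rangle$ disappears, leaving \eqref{5.2}. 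The point is that passing from the modified flow to the genuine Ricci flow + MCF is exactly the operation of removing the $-\nabla f$ diffeomorphism, and the $f$-dependent terms in Theorem \ref{thm3} are precisely the Lie-derivative corrections generated by that diffeomorphism.

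Alternatively — and this is probably the cleaner way to present it, avoiding any subtlety about whether the intrinsic metric induced on $\partial M$ really is being pushed around correctly — I would redo the computation of Theorem \ref{thm3} directly in the MCF setting, using Proposition \ref{prop4}'s own hypothesis $\partial_t x = H e_0$ in place of the modified-Ricci-flow evolution. The only inputs that change are the variational formulas: instead of $\delta g = -2(\Ric + \Hess f)$ one uses the normal-variation formulas for a hypersurface moving with speed $H$ in the normal direction, i.e. $\partial_t g_{ij} = -2 R_{ij} - 2 H A_{ij}$ (first variation of the induced metric under a normal deformation, plus the Ricci-flow motion of the ambient metric), and the corresponding formula for $\partial_t A_{ij}$ from \eqref{3.4} with $v_{\alpha\beta}$ the appropriate combination. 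From there the algebra — invoking the same Simons-type identity \eqref{4.9}, the commutation formula \eqref{4.6}, and \eqref{2.5} — runs in parallel with the proof of Theorem \ref{thm3} but with all $\Hess f$ contributions absent. The main obstacle, and the only place requiring genuine care rather than bookkeeping, is making the diffeomorphism correspondence in the first approach fully rigorous — in particular verifying that the boundary condition $H + e_0 f = 0$ is exactly what is needed for $\partial M$ to be fixed (not just preserved setwise with reparametrization) under the flow generated by $-\nabla f$, so that Theorem \ref{thm3} applies with no leftover tangential motion. If one is willing instead to simply recompute, as in the second approach, there is essentially no obstacle: it is the proof of Theorem \ref{thm3} with the $f$-terms crossed out.
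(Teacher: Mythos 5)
Your first approach is the paper's own: pull back by the flow of $-\nabla f$ and quote Theorem \ref{thm3}. But as written it has a genuine gap at the very first step: in Proposition \ref{prop4} no function $f$ is given, and you simply posit one (``let $\Phi_t$ be the flow of $-\nabla f(\cdot,t)$ \dots\ $\widetilde f$ satisfies the conjugate heat equation'') already equipped with every property used later. The paper manufactures it: assuming first that $\Sigma_t = \partial X_t$ bounds compact domains, it solves the conjugate heat equation (\ref{5.3}) backwards in time on $\bigcup_{t\in[a,b]}(X_t\times\{t\})$ with the boundary condition $e_0 u = Hu$ (equivalently $H+e_0 f=0$ for $f=-\ln u$); positivity of $u$ makes $f$ well defined, the boundary condition is exactly what makes the flow of $-\nabla f$ carry $X_a$ onto $X_t$, and the conjugate heat equation supplies hypothesis (\ref{4.2}) of Theorem \ref{thm3}. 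Without producing such an $f$ the reduction cannot start. (The immersed or non-bounding case is then handled by observing that the statement is local on $\Sigma_t$, which is in effect your second, direct-computation approach.)

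Second, your criterion for applying Theorem \ref{thm3} is misstated: you say one must check that $H+e_0f=0$ makes $\partial M$ fixed ``not just preserved setwise with reparametrization,'' with ``no leftover tangential motion.'' That is false in general and also not what is needed. On the boundary $-\nabla f = He_0 - \widehat{\nabla} f$, so the boundary condition only matches normal speeds; there is leftover tangential motion $-\widehat{\nabla} f$. The argument works because Theorem \ref{thm3} is applied in the pulled-back picture, where $\partial X_a$ is literally fixed, and the discrepancy between the $\phi_t$-trajectories and the mean-curvature-flow parametrization of $\Sigma_t$ is exactly this tangential field; transferring back therefore removes precisely the terms $-({\mathcal L}_{\widehat{\nabla} f}g)_{ij}$, $-({\mathcal L}_{\widehat{\nabla} f}A)_{ij}$ and $-\langle\widehat{\nabla} f,\widehat{\nabla} H\rangle$ from (\ref{geqn}), (\ref{4.3}), (\ref{4.4}). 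Accordingly, your heuristic ``the remaining tensors do not involve $f$, so discard every term containing $f$'' is not by itself a derivation (values of tensors correspond under the diffeomorphism, but time derivatives in the two parametrizations do not); your parenthetical remark that the $f$-terms are Lie-derivative corrections is the correct mechanism and is what should carry the proof. Your fallback of redoing the computation directly in the MCF setting is sound and would bypass both issues.
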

\begin{proof}
Suppose first that $\Sigma_t = \partial X_t$ with each $X_t$ compact.
Given a time interval $[a,b]$, find a positive solution 
on $\bigcup_{t \in [a,b]} (X_t \times \{t\}) \subset M \times [a,b]$
of the
conjugate heat equation
\begin{equation} \label{5.3}
\frac{\partial u}{\partial t} = (- \triangle + R) u,
\end{equation}
satisfying the boundary condition $e_0 u = Hu$, 
by solving it backwards in time from $t=b$.
(Choosing diffeomorphisms from $\{X_t\}$ to $X_a$, we can
reduce the problem of solving (\ref{5.3}) to a parabolic equation
on a fixed domain.)
Define $f$ by $u = e^{- f}$.

Let $\{\phi_t\}_{t \in [a,b]}$ be the one-parameter family of
diffeomorphisms generated by $\{- \nabla_{g(t)} f(t)\}_{t \in [a,b]}$, with
$\phi_a = \Id$. Then
$\phi_t(X_a) = X_t$ for all $t$.
Put $\widehat{g}(t) = \phi_t^* g(t)$ and $\widehat{f}(t) = \phi_t^* f(t)$.
Then
\begin{itemize}
\item $\widehat{g}(t)$ and $\widehat{f}(t)$ are defined on $X_a$, \\
\item $\frac{\partial \widehat{g}}{\partial t} = - 2 (\Ric_{\widehat{g}} + 
\Hess( \widehat{f} ))$, \\
\item $e_0 \widehat{f} + \widehat{H} = 0$ and
\item the measure 
$e^{- \widehat{f}(t)} \: dV_{\widehat{g}(t)}$
is constant in $t$.
\end{itemize} 
The proposition now follows from applying 
$(\phi_t^*)^{-1}$ to
equations (\ref{geqn}), (\ref{4.3}) and (\ref{4.4}) (the latter three being
written in terms of $\widehat{g}$ and $\widehat{f}$).

As the result could be derived from a local calculation
on $\Sigma_t$, it is also valid
without the assumption that $\Sigma_t$ bounds a compact domain.
\end{proof}

\begin{example} \label{ex1}
If $M = \R^n$ and $g(t) = g_{\Flat}$ then
equations (\ref{geqn2}), (\ref{5.1}) and (\ref{5.2}) are the same as 
\cite[Lemma 3.2, Theorem 3.4 and Corollary 3.5]{Huisken (1984)}
\end{example}

\begin{proposition} \label{prop5}
If $u = e^{-f}$ is a solution to the conjugate heat equation
(\ref{5.3}) then
\begin{align} \label{5.4}
& \frac{dI_\infty}{dt} =  2 \int_{M} |\Ric + \Hess(f)|^2 \: e^{-f} \: dV \\
& + \: 2 \int_{\partial M}  
\left( \frac{\partial H}{\partial t} -
2 \langle \widehat{\nabla} f, \widehat{\nabla} H \rangle
+ A( \widehat{\nabla} f, \widehat{\nabla} f)
+ 2 R^{0i} \widehat{\nabla}_i f - \frac12 \nabla_0 R - H R_{00}
\right)
\: e^{-f} \: dA. \notag 
\end{align}
\end{proposition}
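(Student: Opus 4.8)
The plan is to obtain this as the mean-curvature-flow translation of Proposition \ref{prop3}, using the diffeomorphism reduction already set up in the proof of Proposition \ref{prop4}. So I would first work in the case $\Sigma_t = \partial X_t$ with each $X_t$ compact. Solving the conjugate heat equation (\ref{5.3}) backward in time on $\bigcup_t X_t$ with boundary condition $e_0 u = H u$ and writing $u = e^{-f}$, let $\{\phi_t\}$ be the flow of $\{- \nabla_{g(t)} f(t)\}$ with $\phi_a = \Id$, so that $\phi_t(X_a) = X_t$. As recorded in the proof of Proposition \ref{prop4}, the pulled-back pair $\widehat{g}(t) = \phi_t^* g(t)$, $\widehat{f}(t) = \phi_t^* f(t)$ then lives on the fixed manifold-with-boundary $X_a$, evolves by $\frac{\partial \widehat{g}}{\partial t} = - 2 (\Ric_{\widehat{g}} + \Hess(\widehat{f}))$ and $\frac{\partial \widehat{f}}{\partial t} = - \triangle \widehat{f} - \widehat{R}$, and satisfies $e_0 \widehat{f} + \widehat{H} = 0$ on $\partial X_a$. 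Hence Proposition \ref{prop3}, in particular (\ref{4.14}), applies verbatim to $(\widehat{g}(t), \widehat{f}(t))$ on $X_a$.

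I would then carry that identity back along $\phi_t$. The functional $I_\infty$ is diffeomorphism invariant, so $I_\infty(\widehat{g}(t), \widehat{f}(t))$ and $I_\infty(g(t), f(t))$ agree for every $t$ and therefore have equal derivatives; the interior integral $\int |\Ric + \Hess(f)|^2 \, e^{-f} \, dV$ is likewise diffeomorphism invariant, so it is unchanged. In the boundary integrand of (\ref{4.14}), every term other than $\partial H / \partial t$ is a pointwise natural expression in the metric, in $f$, and in their spatial covariant derivatives, so each of these pulls back to the corresponding expression for $(g(t), f(t))$ on $\Sigma_t$. The only term requiring attention is $\partial H / \partial t$: since $\widehat{H}(t) = \phi_t^* H(t)$ and $\phi_t$ has velocity $- \nabla f$, one gets $\frac{\partial \widehat{H}}{\partial t} = \phi_t^* \bigl( \frac{\partial H}{\partial t} - \langle \nabla f, \nabla H \rangle \bigr)$, and since $H$ is a function on $\Sigma_t$ its gradient is tangential, so $\langle \nabla f, \nabla H \rangle = \langle \widehat{\nabla} f, \widehat{\nabla} H \rangle$. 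Thus, after applying $(\phi_t^*)^{-1}$, the term $\partial H / \partial t$ in (\ref{4.14}) --- there the modified-Ricci-flow time derivative on $X_a$ --- becomes $\frac{\partial H}{\partial t} - \langle \widehat{\nabla} f, \widehat{\nabla} H \rangle$ with $\partial H / \partial t$ now the mean curvature flow derivative on $\Sigma_t$. This extra $- \langle \widehat{\nabla} f, \widehat{\nabla} H \rangle$ adds to the $- \langle \widehat{\nabla} f, \widehat{\nabla} H \rangle$ already present in (\ref{4.14}), which is exactly what produces the coefficient $2$ in (\ref{5.4}); all remaining boundary terms match termwise, and (\ref{5.4}) follows. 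As in Proposition \ref{prop4}, since the boundary integrand is a local quantity on $\Sigma_t$, the hypothesis that $\Sigma_t$ bounds can then be removed.

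The computationally heavy steps --- the variational identities, the Simons-type identity, and the manipulations behind (\ref{4.14}) --- are all already packaged in Proposition \ref{prop3} and need not be repeated. The one point I expect to require real care is the transformation of the single convective term: getting the sign right (the flow is generated by $- \nabla f$, not $+ \nabla f$) and recognizing that it is the tangential gradient $\widehat{\nabla} H$ that enters, so that the term genuinely doubles the $\langle \widehat{\nabla} f, \widehat{\nabla} H \rangle$ already in (\ref{4.14}) rather than cancelling it. A secondary technical point --- the backward solvability of (\ref{5.3}) with the condition $e_0 u = H u$, and the fact that the flow of $- \nabla f$ carries $X_a$ onto $X_t$ --- is already established in the proof of Proposition \ref{prop4} and may simply be cited.
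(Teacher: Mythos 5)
Your proposal is correct and follows the paper's own route: the paper proves this proposition simply by citing Proposition \ref{prop3}, with the diffeomorphism reduction (conjugate heat solution, boundary condition $e_0 u = Hu$, pullback along the flow of $-\nabla f$) borrowed from the proof of Proposition \ref{prop4}, exactly as you describe. Your accounting of the convective term, which converts the modified-flow $\partial H/\partial t$ of (\ref{4.14}) into the mean-curvature-flow $\partial H/\partial t$ and doubles the $\langle \widehat{\nabla} f, \widehat{\nabla} H\rangle$ coefficient, is consistent with the paper's own equations (\ref{4.4}) versus (\ref{5.2}).
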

\begin{proof}
This follows from Proposition \ref{prop3}.
\end{proof}

\begin{example} \label{ex2}
If $M = \R^n$, and $g(t) = g_{\Flat}$ then
Proposition \ref{prop5} is the same as 
\cite[Propositions 3.2 and 3.4]{Ecker (2007)}, after making the
change from the ${\mathcal F}$-type functional considered in this
paper to the ${\mathcal W}$-type functional considered in
\cite{Ecker (2007)}.
\end{example}

We will need the next lemma later.

\begin{lemma} \label{area}
We have
\begin{equation}
\frac{d}{dt} (dA) = - \left( R_i^{\: \: i} + H^2 \right) \: dA.
\end{equation}
\end{lemma}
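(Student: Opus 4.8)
The plan is to compute in local coordinates on $\Sigma$, writing the area density as $dA = \sqrt{\det(g_{ij}(t))}\; dx^1 \cdots dx^{n-1}$, where $g_{ij}(t)$ is the induced metric on $\Sigma_t$, i.e.\ the pullback of $g(t)$ under the immersion $e(t)$. Since the coordinate density $dx^1 \cdots dx^{n-1}$ is time-independent, Jacobi's formula for the derivative of a determinant gives
\begin{equation*}
\frac{d}{dt}(dA) = \left( \frac{d}{dt} \log \sqrt{\det(g_{ij})} \right) dA = \frac12 \, g^{ij} \frac{\partial g_{ij}}{\partial t} \; dA .
\end{equation*}

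It then remains only to insert the evolution equation for the induced metric. By Proposition \ref{prop4}, equation (\ref{geqn2}), we have $\frac{\partial g_{ij}}{\partial t} = - 2 R_{ij} - 2 H A_{ij}$. Contracting with $\frac12 g^{ij}$ and using $g^{ij} A_{ij} = H$ yields
\begin{equation*}
\frac12 \, g^{ij} \frac{\partial g_{ij}}{\partial t} = - \, g^{ij} R_{ij} - H \, g^{ij} A_{ij} = - \left( R_i^{\: \: i} + H^2 \right),
\end{equation*}
which is the claimed formula.

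There is no genuine obstacle here: all the substantive content already resides in (\ref{geqn2}). The only point that deserves a word of care is the meaning of $\frac{d}{dt}$: it is the total derivative along the mean curvature flow, incorporating both the motion of $\Sigma_t$ inside $M$ (via $\frac{dx}{dt} = H e_0$) and the evolution of the ambient metric $g(t)$ under Ricci flow. But this is precisely the derivative of the induced metric recorded in (\ref{geqn2}), so the two effects need not be accounted for separately.
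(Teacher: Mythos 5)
Your proof is correct and is essentially the paper's own argument: the paper likewise computes $\frac{d}{dt}(dA) = \frac12 \bigl( g^{ij} \partial_t g_{ij} \bigr) dA$ and substitutes (\ref{geqn2}). You merely spell out the Jacobi-formula step that the paper leaves implicit.
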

\begin{proof}
Using (\ref{geqn2}),
\begin{equation}
\frac{d}{dt} (dA) = \frac12 \left( g^{ij} \: \frac{\partial g_{ij}}{\partial t}
\right) \: dA =
- \left( R_i^{\: \: i} + H^2 \right) \: dA.
\end{equation}
This proves the lemma.
\end{proof}

Using Lemma \ref{area}, we prove that a
mean curvature flow of two-spheres, in a three-dimensional immortal
Ricci flow solution, must have a finite-time singularity.

\begin{proposition} \label{ext}
Suppose that $(M, g(\cdot))$ is a three-dimensional Ricci flow
solution that is defined for $t \in [0, \infty)$,
with complete time slices and uniformly
bounded curvature on compact time intervals.
If $\{\Sigma_t\}$ is a mean curvature flow of two-spheres in
$(M, g(\cdot))$ then
the mean curvature flow has a finite-time singularity.
\end{proposition}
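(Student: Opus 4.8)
The plan is to contradict the existence of a smooth immortal mean curvature flow of two-spheres by combining the Gauss--Bonnet theorem with the evolution of the area measure from Lemma \ref{area}. Suppose the mean curvature flow $\{\Sigma_t\}$ exists smoothly for all $t \in [0,\infty)$. Each $\Sigma_t$ is a two-sphere, so by Gauss--Bonnet $\int_{\Sigma_t} K_{\Sigma_t} \: dA = 4\pi$, where $K_{\Sigma_t}$ is the intrinsic Gauss curvature of $\Sigma_t$. The Gauss equation expresses $K_{\Sigma_t}$ in terms of the ambient sectional curvature of the tangent plane to $\Sigma_t$ and the second fundamental form: in the notation of the excerpt (here $n=3$, so $\partial M$ is two-dimensional and the tangent indices run over $\{1,2\}$) one has
\begin{equation}
K_{\Sigma_t} = R_{1212}/\det(g_{ij}) + \det(A_{ij})/\det(g_{ij}) = K_M(T\Sigma_t) + \tfrac12\left(H^2 - |A|^2\right),
\end{equation}
where $K_M(T\Sigma_t)$ is the ambient sectional curvature of the $2$-plane tangent to $\Sigma_t$ and $|A|^2 = A^{ij}A_{ij}$. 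Integrating and using Gauss--Bonnet gives a two-sided bound
\begin{equation}
\int_{\Sigma_t} |A|^2 \: dA = \int_{\Sigma_t} H^2 \: dA + 2 \int_{\Sigma_t} K_M(T\Sigma_t) \: dA - 8\pi \leq \int_{\Sigma_t} H^2 \: dA + C \operatorname{Area}(\Sigma_t),
\end{equation}
where $C$ is an upper bound for twice the ambient sectional curvature on the relevant compact time interval (this is where the hypothesis of uniformly bounded curvature on compact time intervals is used). The reverse inequality $\int H^2 \leq \int |A|^2 + C\operatorname{Area}$ holds by Cauchy--Schwarz in the form $H^2 \leq 2|A|^2$; in fact $H^2 = (A_{11}+A_{22})^2 \leq 2(A_{11}^2 + A_{22}^2) \leq 2|A|^2$ after diagonalizing, so $\int H^2 \: dA$ and $\int |A|^2 \: dA$ differ by at most a multiple of the area.

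Next I would track the area. By Lemma \ref{area}, $\frac{d}{dt}\operatorname{Area}(\Sigma_t) = - \int_{\Sigma_t} (R_i^{\: \: i} + H^2) \: dA$. Since $|R_i^{\: \: i}|$ is bounded by a constant depending only on the curvature bound on $[0,T]$, and $H^2 \geq 0$, on any compact interval the area is bounded above by $e^{Ct}\operatorname{Area}(\Sigma_0)$, hence stays finite; more importantly, $\frac{d}{dt}\operatorname{Area}(\Sigma_t) \leq C \operatorname{Area}(\Sigma_t) - \int_{\Sigma_t} H^2 \: dA$, so $\int_0^T \int_{\Sigma_t} H^2 \: dA \: dt$ is finite on each finite interval but potentially grows. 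The strategy is now the standard Huisken-type argument: either $\int_{\Sigma_t} H^2 \: dA$ stays bounded as $t \to \infty$, in which case, combined with the bounded-area estimate, one shows $\int_{\Sigma_t}|A|^2 \: dA$ is bounded and then $\operatorname{Area}(\Sigma_t)$ must decrease toward a positive limit forcing $\int H^2 \: dA \to 0$ — but then $\int_{\Sigma_t}|A|^2 \: dA \to$ something bounded while Gauss--Bonnet forces $\int_{\Sigma_t}|A|^2 \: dA \geq 8\pi - C\operatorname{Area}(\Sigma_t)$, and one extracts a contradiction with the fact that a smooth immortal flow of spheres cannot have area bounded below (the spheres would have to collapse or the second fundamental form blow up in integral norm). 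The cleanest route: show that $\operatorname{Area}(\Sigma_t) \to 0$ as $t \to \infty$ is impossible for a smooth flow defined for all time (since area decrease is controlled, $\operatorname{Area}$ converges to a limit $\geq 0$; if the limit is $0$ the flow has extincted in infinite time, which one rules out by a diameter/curvature bound giving a lower area bound; if the limit is positive, the Gauss--Bonnet identity pins $\int|A|^2$ from below, hence $\int H^2$ from below, hence $\frac{d}{dt}\operatorname{Area} \leq -c < 0$ eventually, contradicting boundedness of area below). Either way, one contradicts the assumption that no finite-time singularity occurs.

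The main obstacle I anticipate is making the dichotomy in the last paragraph genuinely quantitative and airtight: one must rule out the borderline behavior in which $\operatorname{Area}(\Sigma_t)$ decreases to a positive limit with $\int_{\Sigma_t} H^2 \: dA \to 0$. The point is that $\int H^2 \: dA \to 0$ together with the $L^2$ Gauss--Bonnet identity forces $2\int K_M(T\Sigma_t)\,dA \to 8\pi + \lim\int|A|^2 \,dA \geq 8\pi$, so the ambient positive curvature concentrated on $\Sigma_t$ cannot vanish; yet for a smooth immortal flow with the stated curvature bounds one can show the spheres $\Sigma_t$ must eventually have large diameter or wander off to regions which, on an immortal Ricci flow with bounded curvature, cannot sustain $\int_{\Sigma_t}K_M \,dA \geq 4\pi$ indefinitely against the monotone area decrease — the clean way to close this is to observe that $\frac{d}{dt}\operatorname{Area}(\Sigma_t) = -\int(R_i^{\:\:i}+H^2)\,dA$ and $R_i^{\:\:i} = 2K_M(T\Sigma_t) + 2\Ric(e_0,e_0)$ in dimension three so $\int R_i^{\:\:i}\,dA = 2\int K_M\,dA + 2\int\Ric(e_0,e_0)\,dA \geq 8\pi - C\operatorname{Area}$, giving $\frac{d}{dt}\operatorname{Area} \leq -8\pi + C\operatorname{Area} + (\text{nonpositive } H^2 \text{ term})$, so whenever $\operatorname{Area}(\Sigma_t) < 4\pi/C$ the area strictly decreases at rate bounded away from zero — and since the area can be made small (or one passes to a time when it is small), this forces extinction in finite time, the desired contradiction.
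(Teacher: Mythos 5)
Your setup (Lemma \ref{area} plus Gauss--Bonnet plus the Gauss equation) is the right raw material, but the argument as proposed has two problems, one of bookkeeping and one genuine gap. The bookkeeping issue: Gauss--Bonnet controls the \emph{intrinsic} curvature $K_{\Sigma_t}$, not the ambient sectional curvature $K_M(T\Sigma_t)$, and these differ by the extrinsic term $\det A = \tfrac12(H^2 - |A|^2)$, which has no sign; so the inequality $\int_{\Sigma_t} R_i^{\:\:i}\,dA \ge 8\pi - C\operatorname{Area}(\Sigma_t)$ in your last paragraph is not correct as stated (also $R_i^{\:\:i} = 2K_M(T\Sigma_t) + \Ric(e_0,e_0)$, not $2K_M + 2\Ric(e_0,e_0)$). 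This is fixable if you keep the $H^2$ from Lemma \ref{area}: the paper's identity $R_i^{\:\:i} + H^2 = \tfrac12\bigl(R + \widehat{R} + H^2 + A^{ij}A_{ij}\bigr)$ packages exactly this, with $\widehat R = 2K_{\Sigma_t}$ handled by Gauss--Bonnet and the extrinsic terms appearing with a favorable sign.

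The genuine gap is in how you control the ambient curvature term for all time. Your constant $C$ comes from the curvature bound on a \emph{compact} time interval, so the differential inequality $\frac{d}{dt}\operatorname{Area} \le -8\pi + C_T\operatorname{Area}$ only holds on $[0,T]$ with $C_T$ possibly growing in $T$; the threshold $4\pi/C_T$ below which the area must strictly decrease then shrinks with $T$, and your closing step ``since the area can be made small (or one passes to a time when it is small)'' is precisely the point that needs proof --- nothing in the dichotomy you sketch rules out the area decreasing to a positive limit while the negative ambient curvature term balances the $-8\pi$. The paper closes this with one key input you are missing: for an immortal Ricci flow with complete slices and bounded curvature one has the a priori estimate $R(x,t) \ge -\tfrac{3}{2t}$, so the bad term decays in time and one gets $\frac{d{\mathcal A}}{dt} \le \tfrac{3}{4t}{\mathcal A}(t) - 4\pi$; an ODE comparison then forces ${\mathcal A}$ to become nonpositive in finite time, the desired contradiction. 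Without this (or some other time-decaying control of the negative part of the ambient curvature, which is exactly where immortality of the Ricci flow is used), the compact-interval constant $C$ cannot be promoted to a global-in-time argument, and the borderline case you yourself flag remains open.
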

\begin{proof}
We estimate the area of $\Sigma_t$, along the lines of Hamilton's
area estimate for minimal disks in a Ricci flow solution
\cite[Section 11]{Hamilton (1999)},
\cite[Lemma 91.12]{Kleiner-Lott (2008)}. Let ${\mathcal A}(t)$ 
denote the area of
$\Sigma_t$.
From Lemma \ref{area},
\begin{equation}
\frac{d{\mathcal A}}{dt} = - \int_{\Sigma_t} (R_i^{\: \: i} + H^2) \: dA.
\end{equation}
Now
\begin{equation}
R_i^{\: \: i} = \frac12 \left( R + R_{ij}^{\: \: \: \: ij} \right) =
\frac12 \left( R + \widehat{R} - H^2 + A^{ij} A_{ij} \right),
\end{equation}
where $\widehat{R}$ denotes the scalar curvature of $\Sigma_t$.
From a standard Ricci flow estimate \cite[(B.2)]{Kleiner-Lott (2008)},
\begin{equation}
R(x,t) \ge - \frac{3}{2t}.
\end{equation}
Then
\begin{equation}
\frac{d{\mathcal A}}{dt} \: = \: - \: \frac12 \int_{\Sigma_t}
\left( R + \widehat{R} + H^2 + A^{ij} A_{ij} \right) \: dA \: \le
\frac{3}{4t} {\mathcal A}(t) - 2 \pi \chi(\Sigma_t) = 
\frac{3}{4t} {\mathcal A}(t) - 4 \pi.
\end{equation}
It follows that for any time $t_0 > 0$ at which the mean curvature
flow exists, we would have ${\mathcal A}(t) \le 0$ for 
$t \ge t_0 \left( 1 + \frac{{\mathcal A}(t_0)}{16 \pi t_0} \right)^4$.
Thus the mean curvature flow must go singular.
\end{proof}

\begin{remark}
The analog of Proposition \ref{ext}, in one dimension lower, is no
longer true, as can been seen by taking a closed geodesic in a 
flat $2$-torus.  This contrasts with the fact that any
compact mean curvature flow in $\R^n$ has a finite-time singularity.
\end{remark}

\subsection{Mean curvature solitons} \label{subsection5.2}

Suppose that $(M, g(\cdot), \overline{f}(\cdot))$ 
is a gradient soliton solution to 
the Ricci flow. 
We recall that this means
\begin{enumerate}
\item $(M, g(\cdot))$ is a 
Ricci flow solution,
\item At time $t$ we have
\begin{equation}
R_{\alpha \beta} + \nabla_\alpha \nabla_\beta \overline{f} =
\frac{c}{2t} g_{\alpha \beta}, 
\end{equation}
where $c = 0$ in the steady case (for $t \in \R$), $c = -1$ in the shrinking
case (for $t \in (-\infty, 0)$) and $c = 1$ in the expanding case
(for $t \in (0, \infty)$), and
\item
The function $\overline{f}$ satisfies
\begin{equation} \label{feqn}
\frac{\partial \overline{f}}{\partial t} = |\nabla \overline{f}|^2.
\end{equation}
\end{enumerate}

\begin{definition} \label{def3}
At a given time $t$, a hypersurface $\Sigma_t$ is a mean curvature soliton if 
\begin{equation} \label{5.5}
H + e_0 \overline{f} = 0.
\end{equation}
\end{definition}

Equation (\ref{5.5}) involves no choice of local orientations.

When restricted to $\Sigma_t$, the equations 
$R_{ij} + \nabla_i \nabla_j \overline{f} = 0$ and 
$R_{i0} + \nabla_i \nabla_0 \overline{f} = 0$ become

\begin{align} \label{5.6}
R_{ij} + \widehat{\nabla}_i \widehat{\nabla}_j \overline{f} +
H A_{ij} & = 0, \\
R_{i0} - \widehat{\nabla}_i H + A_i^{\: \: k} \:
\widehat{\nabla}_k \overline{f} & = 0. \notag 
\end{align}

\begin{example} \label{ex3}
If $M = \R^n$ and $g(t) = g_{\Flat}$, let $L$ be a linear
function on $\R^n$. Put $\overline{f} = L + t |\nabla L|^2$,
so that $\overline{f}$ satisfies (\ref{feqn}).
Then after changing $\overline{f}$ to $-f$, the equations in 
(\ref{5.6}) become
\begin{align}
\widehat{\nabla}_i \widehat{\nabla}_j f & =  H A_{ij}, \\
\widehat{\nabla}_i H + A_i^{\: \: k} \:
\widehat{\nabla}_k f & = 0, \notag 
\end{align}
which appear on \cite[p. 219]{Hamilton (1995)} as equations
for a translating soliton.
\end{example}

If $(M, g(\cdot), \overline{f}(\cdot))$ is a gradient steady soliton, 
let $\{\phi_t\}$ be the one-parameter family of diffeomorphisms generated
by the time-independent vector field  $- \nabla_{g(t)} \overline{f}(t)$, with
$\phi_0 = \Id$. If $\Sigma_0$ is a mean curvature soliton at time zero then its
ensuing mean curvature flow $\{\Sigma_t\}$ consists of
mean curvature solitons, and $\{\Sigma_t\}$ differs from
$\{\phi_t(\Sigma_0)\}$ by hypersurface diffeomorphisms.

There is a similar description of the mean curvature flow of
a mean curvature soliton if $(M, g(\cdot), \overline{f}(\cdot))$ 
is a gradient shrinking soliton or a gradient expanding soliton.

\begin{proposition} \label{prop6}
If $(M, g(\cdot), \overline{f}(\cdot))$ 
is a gradient steady soliton and $\{\Sigma_t\}$ is
the mean curvature flow of a mean curvature soliton then
\begin{equation} \label{5.7}
\frac{\partial H}{\partial t} -
2 \langle \widehat{\nabla} \overline{f}, \widehat{\nabla} H \rangle
+ A( \widehat{\nabla} \overline{f}, \widehat{\nabla} \overline{f})
+ 2 R^{0i} \widehat{\nabla}_i \overline{f} - \frac12 \nabla_0 R - H R_{00} = 0.
\end{equation}
\end{proposition}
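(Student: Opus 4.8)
The plan is to derive \eqref{5.7} directly from Proposition~\ref{prop3} (equivalently Proposition~\ref{prop5}), using the soliton structure to verify that the two hypotheses of that proposition hold. Recall that Proposition~\ref{prop5} applies when $u = e^{-f}$ solves the conjugate heat equation \eqref{5.3}. For a gradient steady soliton, the standard relation between the soliton potential $\overline{f}$ and a solution of the conjugate heat equation is exactly $f = -\overline{f}$ (up to an additive constant, which is irrelevant to all the expressions appearing): one checks that with $R_{\alpha\beta} + \nabla_\alpha\nabla_\beta \overline{f} = 0$ and $\partial_t \overline{f} = |\nabla \overline{f}|^2$, the function $u = e^{\overline{f}}$ satisfies $\partial_t u = (-\triangle + R)u$. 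So I would open by recording this identification and noting that, under it, the mean curvature soliton condition \eqref{5.5}, namely $H + e_0\overline{f} = 0$, is precisely the standing boundary assumption $H + e_0 f = 0$ used throughout Sections~\ref{section3}--\ref{section4}.

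Next I would invoke the soliton equations at the level of the ambient manifold: $R_{\alpha\beta} + \nabla_\alpha \nabla_\beta \overline{f} = 0$ everywhere on $M$, hence in particular the restricted components $\left(R_{ij} + \nabla_i\nabla_j \overline{f}\right)\big|_{\Sigma_t} = 0$ and $\left(R_{i0} + \nabla_i\nabla_0 \overline{f}\right)\big|_{\Sigma_t} = 0$ vanish --- these are displayed in \eqref{5.6}. Translating to $f = -\overline{f}$, we get $\left(R_{ij} + \nabla_i\nabla_j f\right)\big|_{\partial M} = 0$ and $\left(R_{i0} + \nabla_i\nabla_0 f\right)\big|_{\partial M} = 0$, which are exactly the two hypotheses under which \eqref{4.15} of Proposition~\ref{prop3} asserts that the boundary integrand vanishes pointwise. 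Substituting $f = -\overline{f}$ into \eqref{4.15} and noting that the expression is invariant under $f \mapsto -\overline{f}$ in each of its terms --- $\widehat{\nabla}f$ appears quadratically in $A(\widehat{\nabla}f,\widehat{\nabla}f)$ and $\langle\widehat{\nabla}f,\widehat{\nabla}H\rangle$, wait, not quadratically in the latter; rather $H$ itself satisfies $H = -e_0\overline{f} = e_0 f$ is unchanged, and the terms $\langle\widehat{\nabla}f,\widehat{\nabla}H\rangle$ and $R^{0i}\widehat{\nabla}_i f$ each carry a single $\widehat{\nabla}f$, so replacing $f$ by $-\overline{f}$ flips their sign. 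I should therefore be careful: \eqref{4.15} with $f = -\overline{f}$ reads $\partial_t H + \langle\widehat{\nabla}\overline{f},\widehat{\nabla}H\rangle + A(\widehat{\nabla}\overline{f},\widehat{\nabla}\overline{f}) - 2R^{0i}\widehat{\nabla}_i\overline{f} - \tfrac12\nabla_0 R - HR_{00} = 0$, and the claimed \eqref{5.7} has $-2\langle\widehat{\nabla}\overline{f},\widehat{\nabla}H\rangle$ and $+2R^{0i}\widehat{\nabla}_i\overline{f}$ instead. So a straightforward sign substitution does not immediately produce \eqref{5.7}; there must be an extra relation being used.

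The resolution, and the step I expect to be the genuine content rather than bookkeeping, is that on a mean curvature soliton the ``time derivative along the flow'' of $H$ differs from the quantity $\partial H/\partial t$ appearing in Proposition~\ref{prop3} by a Lie-derivative term coming from the soliton diffeomorphisms, and similarly $\langle\widehat{\nabla}\overline{f},\widehat{\nabla}H\rangle$ gets related to $A(\widehat{\nabla}\overline{f},\widehat{\nabla}\overline{f})$ and curvature terms via the restricted soliton equations \eqref{5.6}. Concretely, I would: (i) use the first equation of \eqref{5.6} to rewrite $\widehat{\nabla}_i\widehat{\nabla}_j\overline{f} = -R_{ij} - HA_{ij}$ and the second to rewrite $\widehat{\nabla}_i H = R_{i0} + A_i^{\ k}\widehat{\nabla}_k\overline{f}$; (ii) substitute these into the evolution equation \eqref{4.4}/\eqref{5.2} for $\partial_t H$ and into the inner product $\langle\widehat{\nabla}\overline{f},\widehat{\nabla}H\rangle = \widehat{\nabla}^i\overline{f}\,(R_{i0} + A_i^{\ k}\widehat{\nabla}_k\overline{f})$, which introduces exactly a $A(\widehat{\nabla}\overline{f},\widehat{\nabla}\overline{f})$ term and an $R^{0i}\widehat{\nabla}_i\overline{f}$ term; (iii) assemble these, together with the second contracted Bianchi identity \eqref{4.17} as in the proof of Proposition~\ref{prop3}, to convert the known vanishing \eqref{4.15} (in the form with $+\langle\widehat{\nabla}\overline{f},\widehat{\nabla}H\rangle$) into the desired form \eqref{5.7} (with $-2\langle\widehat{\nabla}\overline{f},\widehat{\nabla}H\rangle$). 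The main obstacle is purely the careful tracking of the Lie-derivative/diffeomorphism correction relating the ``$\partial_t$ on the fixed manifold-with-boundary'' picture of Section~\ref{section4} to the ``mean curvature flow of $\Sigma_t$'' picture here; once \eqref{5.6} is used to trade the $\widehat{\nabla}\widehat{\nabla}\overline{f}$ and $\widehat{\nabla}H$ terms for curvature and second-fundamental-form terms, \eqref{5.7} should drop out of \eqref{4.15} and \eqref{4.4} by direct substitution.
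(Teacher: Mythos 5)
Your overall strategy (verify the two hypotheses of Proposition~\ref{prop3} from the soliton equation, then account for the gauge change to the mean curvature flow picture as in Proposition~\ref{prop4}) is the paper's strategy, but it is derailed by a sign error at the very first step: the identification $f=-\overline f$ is wrong. For a gradient steady soliton with $\Ric+\Hess(\overline f)=0$ and $\partial_t\overline f=|\nabla\overline f|^2$, the function that solves the conjugate heat equation is $u=e^{-\overline f}$, i.e.\ $f=\overline f$, not $f=-\overline f$: tracing the soliton equation gives $\triangle\overline f=-R$, so
\begin{equation*}
\triangle e^{-\overline f}=e^{-\overline f}\bigl(|\nabla\overline f|^2-\triangle\overline f\bigr)=e^{-\overline f}\bigl(|\nabla\overline f|^2+R\bigr),
\qquad
(-\triangle+R)\,e^{-\overline f}=-e^{-\overline f}|\nabla\overline f|^2=\partial_t e^{-\overline f},
\end{equation*}
whereas $u=e^{+\overline f}$ gives $(-\triangle+R)u=-u|\nabla\overline f|^2+2Ru\neq\partial_t u=u|\nabla\overline f|^2$ in general. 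With your identification $f=-\overline f$, nothing you need actually transfers: $R_{ij}+\nabla_i\nabla_j f=R_{ij}-\nabla_i\nabla_j\overline f=2R_{ij}$ is not zero, the standing boundary assumption becomes $H-e_0\overline f=0$ rather than the mean curvature soliton condition $H+e_0\overline f=0$, and the sign mismatch you then observe between the substituted \eqref{4.15} and \eqref{5.7} is an artifact of this error, not evidence of a hidden extra relation. Your proposed repair via \eqref{5.6} cannot close it: using $R_{i0}=\widehat\nabla_i H-A_i^{\ k}\widehat\nabla_k\overline f$, the discrepancy between your version and \eqref{5.7} reduces to $-\langle\widehat\nabla\overline f,\widehat\nabla H\rangle+4A(\widehat\nabla\overline f,\widehat\nabla\overline f)$, which does not vanish in general.

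With the correct identification $f=\overline f$ the argument is the short one the paper gives: the soliton equation restricted to $\Sigma_t$ yields exactly $\bigl(R_{ij}+\nabla_i\nabla_j f\bigr)\big|_{\Sigma_t}=0$ and $\bigl(R_{i0}+\nabla_i\nabla_0 f\bigr)\big|_{\Sigma_t}=0$, the mean curvature soliton condition is verbatim the assumption $H+e_0 f=0$, and \eqref{4.15} applies. The only remaining point is the one you correctly flag at the end: \eqref{4.15} lives in the modified-flow gauge of Section~\ref{section4} and carries the coefficient $-1$ on $\langle\widehat\nabla f,\widehat\nabla H\rangle$, while \eqref{5.7} lives in the mean curvature flow gauge and carries $-2$; undoing the diffeomorphisms generated by $-\nabla f$, as in the proof of Proposition~\ref{prop4}, shifts $\partial H/\partial t$ by the tangential advection term $-\langle\widehat\nabla f,\widehat\nabla H\rangle$ and produces precisely \eqref{5.7}. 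So the Lie-derivative bookkeeping you anticipate is needed only for this coefficient, not to repair any sign in the curvature or $A(\widehat\nabla\overline f,\widehat\nabla\overline f)$ terms.
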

\begin{proof}
We clearly have 
$\left( R_{ij} + \nabla_i \nabla_j \overline{f} \right) \Big|_{\Sigma_t} = 0$
and $\left( R_{i0} + \nabla_i \nabla_0 \overline{f} \right) 
\Big|_{\Sigma_t} = 0$. 
The proposition now follows from Proposition \ref{prop3}, along the lines of the
proof of Proposition \ref{prop4}.
\end{proof}

\begin{example} \label{ex4}
Suppose that $M = \R^n$, $g(t) = g_{\Flat}$, $L$ is a linear
function on $\R^n$ and $\overline{f} = L + t |\nabla L|^2$.
After putting $V(t) = - \widehat{\nabla} \overline{f}$, 
Proposition \ref{prop6} is the same as \cite[Lemma 3.2]{Hamilton (1995)}.
\end{example}

\subsection{Huisken monotonicity} \label{subsection5.3}

\begin{proposition} \label{prop7}
\cite{Magni-Mantegazza-Tsatis (2009)}
If $\{\Sigma_t\}$ is a mean curvature flow of compact hypersurfaces
in a gradient steady Ricci soliton
$(M, g(\cdot), \overline{f}(\cdot))$
then $\int_{\Sigma_t} e^{- \overline{f}(t)} \: dA$ is nonincreasing in
$t$. It is constant in $t$ if and only if $\{\Sigma_t\}$ are
mean curvature solitons.
\end{proposition}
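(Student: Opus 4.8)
The plan is to compute $\frac{d}{dt} \int_{\Sigma_t} e^{-\overline{f}(t)} \: dA$ directly and show the integrand of the resulting expression is a perfect square (up to sign), vanishing precisely on mean curvature solitons. First I would differentiate under the integral sign, using Lemma \ref{area} for $\frac{d}{dt}(dA) = - (R_i^{\: \: i} + H^2) \: dA$ and the fact that along the mean curvature flow the time-derivative of $\overline{f}(t)$ restricted to $\Sigma_t$ picks up a convective term: the material derivative is $\frac{\partial \overline{f}}{\partial t} + H \, e_0 \overline{f} = |\nabla \overline{f}|^2 + H \, e_0 \overline{f}$ by the soliton equation (\ref{feqn}) and the evolution $\frac{dx}{dt} = H e_0$. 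This gives
\begin{equation} \label{prop7eq1}
\frac{d}{dt} \int_{\Sigma_t} e^{-\overline{f}} \: dA = - \int_{\Sigma_t} \left( R_i^{\: \: i} + H^2 + |\nabla \overline{f}|^2 + H \, e_0 \overline{f} \right) e^{-\overline{f}} \: dA.
\end{equation}

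Next I would rewrite the integrand as a sum of squares. The steady gradient soliton equation $R_{\alpha\beta} + \nabla_\alpha \nabla_\beta \overline{f} = 0$ lets me trade curvature terms for Hessian terms; tracing it tangentially and using (\ref{2.7}) gives $R_i^{\: \: i} = - g^{ij} \nabla_i \nabla_j \overline{f} = - \widehat{\triangle} \overline{f} - H \, e_0 \overline{f}$, using $H + e_0\overline f$ is not assumed here so I keep $e_0\overline f$ explicit. Substituting, the $H \, e_0 \overline{f}$ terms conspire so that after an integration by parts of $\widehat\triangle \overline f$ against $e^{-\overline f}$ on the closed manifold $\Sigma_t$ (which produces $|\widehat\nabla \overline f|^2$), one is left with
\begin{equation} \label{prop7eq2}
\frac{d}{dt} \int_{\Sigma_t} e^{-\overline{f}} \: dA = - \int_{\Sigma_t} \left( H^2 + 2 H \, e_0 \overline{f} + (e_0 \overline{f})^2 \right) e^{-\overline{f}} \: dA = - \int_{\Sigma_t} \left( H + e_0 \overline{f} \right)^2 e^{-\overline{f}} \: dA,
\end{equation}
where the splitting $|\nabla \overline f|^2 = |\widehat\nabla \overline f|^2 + (e_0 \overline f)^2$ is used to cancel the tangential piece against the boundary term. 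This is manifestly $\le 0$, with equality if and only if $H + e_0 \overline{f} = 0$ identically on $\Sigma_t$, i.e. $\Sigma_t$ is a mean curvature soliton by Definition \ref{def3}. The characterization of the equality case over a time interval then follows since the integral is constant iff the integrand vanishes at each time.

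The main obstacle I anticipate is bookkeeping the normal-versus-tangential decomposition consistently: tracking which copies of $e_0 \overline{f}$ come from $R_i^{\: \: i}$ (via the soliton equation and (\ref{2.7})), which from the convective derivative of $\overline f$, and which from splitting $|\nabla \overline f|^2$, so that they assemble into the single square $(H + e_0 \overline{f})^2$ rather than leaving a stray $\widehat\triangle \overline f$ or cross term. For the second, more conceptual proof promised in the introduction, I would instead observe that $\int_{\Sigma_t} e^{-\overline f} \, dA$ is, up to the diffeomorphisms $\{\phi_t\}$ generated by $-\nabla \overline f$ (which turn the Ricci flow into the static soliton metric and the mean curvature flow into a flow by $H_\infty = H + e_0 \overline f$ times the normal), the weighted area in the fixed smooth metric-measure space $(M, g, e^{-\overline f} dV)$, whose first variation under a normal speed $\varphi$ is $-\int \varphi \, H_\infty \, e^{-\overline f} dA$; taking $\varphi = H_\infty$ gives (\ref{prop7eq2}) with no coordinate computation, and Huisken-type monotonicity is then simply the statement that the weighted-area gradient flow decreases weighted area.
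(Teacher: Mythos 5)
Your proposal follows the paper's own argument essentially step for step: the computational proof uses Lemma \ref{area}, the material derivative of $\overline f$ along the flow together with (\ref{feqn}), the traced soliton equation restricted to $\Sigma_t$, and an integration by parts against $e^{-\overline f}$ on the closed hypersurface to arrive at $-\int_{\Sigma_t}(H+e_0\overline f)^2\,e^{-\overline f}\,dA$; and your closing sketch (pull back by the diffeomorphisms generated by $-\nabla\overline f$, so the flow becomes the negative gradient flow of the weighted area with normal speed $H_\infty=H+e_0\overline f$) is exactly the paper's second proof.

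There is, however, one sign slip in the intermediate identity, and it is precisely the bookkeeping point you flagged as delicate. Tracing the steady soliton equation tangentially with (\ref{2.7}) gives
\begin{equation}
R_i^{\ \: i} \;=\; -\,g^{ij}\nabla_i\nabla_j\overline f \;=\; -\,\widehat{\triangle}\,\overline f \;+\; H\, e_0\overline f,
\end{equation}
with a \emph{plus} sign on $H\,e_0\overline f$, since $\nabla_i\nabla_j\overline f=\widehat{\nabla}_i\widehat{\nabla}_j\overline f - A_{ij}\,e_0\overline f$; this is the paper's (\ref{5.10}). You wrote a minus sign. With the correct sign the integrand becomes $-\widehat{\triangle}\,\overline f+|\widehat{\nabla}\overline f|^2+(e_0\overline f)^2+2H\,e_0\overline f+H^2$, the first two terms integrate to zero because they equal $e^{\overline f}\,\widehat{\triangle}\,e^{-\overline f}$, and the perfect square appears. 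With your sign the two copies of $H\,e_0\overline f$ cancel instead of adding, and you would be left with $H^2+(e_0\overline f)^2$; that still yields monotonicity, but it would force $H=0$ and $e_0\overline f=0$ separately in the equality case, contradicting the proposition's characterization of constancy by $H+e_0\overline f=0$. Since your final displayed formula is the correct one, this is a transcription error rather than a conceptual gap, but it is exactly the place where the argument would fail if carried out as literally stated.
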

\begin{proof} 1.
Using the mean curvature flow to relate nearly $\Sigma_t$'s, 
Lemma \ref{area} gives
\begin{align} \label{5.9}
\frac{d}{dt} \int_{\Sigma_t} e^{- \overline{f}(t)} \: dA = &
- \int_{\Sigma_t} \left( \frac{d \overline{f}}{dt} + R_i^{\: \: i} + H^2 \right) 
e^{- \overline{f}(t)} \: dA \\
= & - \int_{\Sigma_t} \left( \frac{\partial \overline{f}}{\partial t}
+ H e_0 \overline{f} + R_i^{\: \: i} + H^2 \right) e^{- \overline{f}(t)} \: dA
\notag \\
= & - \int_{\Sigma_t} \left( |\nabla \overline{f}|^2 
+ H e_0 \overline{f} + R_i^{\: \: i} + H^2 \right) e^{- \overline{f}(t)} \: dA.
\notag
\end{align}
From the soliton equation,
\begin{equation} \label{5.10}
0 = R_i^{\: \: i} + \nabla_i \nabla^i \overline{f} = 
R_i^{\: \: i} + \widehat{\nabla}_i \widehat{\nabla}^i \overline{f} 
+ \Gamma^i_{\: \: 0i} \nabla^0 \overline{f} = 
R_i^{\: \: i} + \widehat{\triangle} \overline{f} 
-H e_0 \overline{f}.
\end{equation}
Then
\begin{align} \label{5.11}
\frac{d}{dt} \int_{\Sigma_t} e^{- \overline{f}(t)} \: dA = &
- \int_{\Sigma_t} \left( - \widehat{\triangle} \overline{f} +
|\widehat{\nabla} \overline{f}|^2 + 
|e_0 \overline{f}|^2 + 2 H e_0 \overline{f} + H^2 \right)
e^{- \overline{f}(t)} \: dA \\
= & - \int_{\Sigma_t} \left( H + 
e_0 \overline{f} \right)^2 \: 
e^{- \overline{f}(t)} \: dA. \notag
\end{align}
The proposition follows.
\end{proof}
\begin{proof} 2. Let $\{\phi_t\}$ be the one-parameter family of
diffeomorphisms considered after Example \ref{ex3}. Put
$\widehat{g}(t) = \phi_t^* g(t)$ and $\widehat{f}(t) =
\phi_t^* \overline{f}(t)$. Then for all $t$, we have
$\widehat{g}(t) = g(0)$ and $\widehat{f}(t) = \overline{f}(0)$. Put
$\widehat{\Sigma}_t = \phi_t^{-1} (\Sigma_t)$. In terms
of $g(0)$ and $\overline{f}(0)$, the surfaces
$\widehat{\Sigma}_t$ satisfy the flow
\begin{equation} \label{5.12}
\frac{dx}{dt} = H e_0 + \nabla \overline{f}(0) = 
(H + e_0 \overline{f}(0)) e_0 + \widehat{\nabla} \overline{f}(0),
\end{equation}
which differs from the flow 
\begin{equation} \label{5.13}
\frac{dx}{dt} = (H + e_0 \overline{f}(0)) e_0
\end{equation}
by diffeomorphisms of the hypersurfaces.  The flow
(\ref{5.13}) is the negative gradient flow of the functional
$\widehat{\Sigma} \rightarrow \int_{\widehat{\Sigma}} 
e^{- \overline{f}(0)} dA$. Hence
$\int_{\widehat{\Sigma}_t} e^{- \overline{f}(0)} dA$ is
nonincreasing in $t$, and more precisely,
\begin{equation} \label{5.14}
\frac{d}{dt} \int_{\widehat{\Sigma}_t} e^{- \overline{f}(0)} dA =
- \int_{\widehat{\Sigma}_t} \left( H + e_0 \overline{f}(0) \right)^2 
e^{- \overline{f}(0)} dA.
\end{equation}
The proposition follows.
\end{proof}

\begin{remark} \label{rmk2}
There are evident analogs of Proposition \ref{prop7}, and its proofs, for
mean curvature flows in gradient shrinking Ricci solitons and
gradient expanding Ricci solitons. For the shrinking case,
where $t \in (-\infty,0)$, put $\tau = -t$. Then
$\tau^{- (n-1)/2} \int_{\Sigma_t} e^{- \overline{f}} \: dA$ is
nonincreasing in $t$.
When $M  = \R^n$, 
$g(\tau) = g_{\Flat}$ and $\overline{f}(x,\tau) = 
\frac{|x|^2}{4\tau}$, we recover Huisken monotonicity
\cite[Theorem 3.1]{Huisken (1990)}.
\end{remark}

\begin{remark} \label{rmk3}
With reference to the second
proof of Proposition \ref{prop7}, the second variation of the
functional $\Sigma \rightarrow \int_{\Sigma} e^{-f} dA$ was
derived in \cite{Bayle (2004)}; see
\cite{Fan (2008),Ho (2010)} for consequences.
The second variation formula also plays a role in
\cite[Section 4]{Colding-Minicozzi (2009)}.
\end{remark}

\begin{remark} \label{rmk4}
As a consequence of the monotonicity statement in Remark \ref{rmk2},
we can say the following about
singularity models; compare with \cite[Theorem 3.5]{Huisken (1990)}.
Suppose that $(M, g(\cdot), \overline{f}(\cdot))$
is a gradient shrinking Ricci soliton, defined for $t \in (-\infty, 0)$.
Let $\{\phi_t \}_{t \in (-\infty, 0)}$ be the corresponding
$1$-parameter family of diffeomorphisms, with
$\phi_t^* g(t) = g(-1)$.
Suppose that $\{\Sigma_t\}$ is a mean curvature flow in the Ricci soliton.
Suppose that for a sequence $\{t_i\}_{i=1}^\infty$
approaching zero from below, a smooth limit $\lim_{i \rightarrow \infty} 
\phi_{t_i}^{-1} \left( \Sigma_{t_t} \right)$ exists
and is a compact hypersurface $\Sigma_{\infty}$ in $(M, g(-1))$. Then
$\Sigma_\infty$ is a time $-1$ mean curvature soliton.
\end{remark}
\bibliographystyle{acm}

\begin{thebibliography}{9}

\bibitem{Bayle (2004)} V. Bayle, ``Propri\'et\'es de concavit\'e du
profil isop\'erim\'etrique et applications'', Th\`ese de Doctorat,
Universit\'e Joseph Fourier Grenoble, 
http://tel.archives-ouvertes.fr/docs/00/04/71/81/PDF/tel-00004317.pdf
(2004)

\bibitem{Colding-Minicozzi (2009)} T. Colding and W. Minicozzi,
``Generic mean curvature flow I; generic singularities'', to appear,
Annals of Math.

\bibitem{Ecker (2007)} K. Ecker, 
``A formula relating entropy monotonicity to Harnack inequalities'',
Comm. in Anal. and Geom. 15, p. 1025-1061 (2007)

\bibitem{Fan (2008)} E. Fan, ``Topology of three-manifolds with
positive $P$-scalar curvature'', Proc. Am. Math. Soc. 136, p. 3255-3261
(2008)

\bibitem{Gibbons-Hawking (1977)} G. Gibbons and S. Hawking,
``Action integrals and partition functions in quantum gravity'',
Phys. Rev. D 15, p. 2752-2756 (1977)

\bibitem{Hamilton (1995)} R. Hamilton, ``Harnack estimate for the
mean curvature flow'', J. Diff. Geom. 41, p. 215-226 (1995)

\bibitem{Hamilton (1999)} R. Hamilton, ``Nonsingular solutions of
the Ricci flow on three-manifolds'', Comm. Anal. Geom. 7, p. 695-729
(1999)

\bibitem{Ho (2010)} P. T. Ho, ``The structure of $\phi$-stable minimal
hypersurfaces in manifolds of nonnegative $P$-scalar curvature'',
Math. Ann. 348, p. 319-332 (2010)

\bibitem{Huisken (1984)} G. Huisken, ``Flow by mean curvature of
convex surfaces into spheres'', J. Diff. Geom. 20, p. 237-266 (1984)

\bibitem{Huisken (1986)} G. Huisken, ``Contracting convex
hypersurfaces in Riemannian manifolds by their mean curvature'',
Invent. Math. 84, p. 463-480 (1986)

\bibitem{Huisken (1990)} G. Huisken, ``Asymptotic behavior for 
singularities of the mean curvature flow'', J. Diff. Geom. 31,
p. 285-299 (1990)

\bibitem{Kleiner-Lott (2008)} B. Kleiner and J. Lott,
``Notes on Perelman's papers'', Geom. and Top. 12, p. 2587-2855 (2008)

\bibitem{Lott (2009)} J. Lott,
``Optimal transport and Perelman's reduced volume'', 
Calc. Var. and Partial Differential Equations 36, p. 49-84 (2009)

\bibitem{Magni-Mantegazza-Tsatis (2009)}
A. Magni, C. Mantegazza and E. Tsatis, ``Flow by mean curvature
in a moving ambient space'', preprint,
http://arxiv.org/abs/0911.5130 (2009)

\bibitem{Perelman (2002)} G. Perelman,
``The entropy formula for the Ricci flow and its geometric
applications'', preprint, http://arxiv.org/abs/math/0211159
(2002)

\bibitem{Simons (1968)} J. Simons, ``Minimal varieties in Riemannian
manifolds'', Ann. of Math. 88, p. 62-105 (1968)

\bibitem{York (1972)} J. York, ``Role of conformal three-geometry in the
dynamics of gravitation'', Phys. Rev. Lett. 28, p. 1082-1085 (1972)

\end{thebibliography}

\end{document}